\def\dbar{\bar\partial}
\def\Tr{\mathrm{tr}}
\def\Vol{\mathrm{Vol}}
\def\C{\mathrm{C}}
\def\dbar{\bar\partial}
\def\Tr{\mathrm{tr}}
\def\Vol{\mathrm{Vol}}
\def\Id{\mathrm{Id}}
\def\Scal{\mathrm{scal}}
\def\PP{\mathbb{P}}
\newtheorem{lemma}{Lemma}[section]
\newtheorem{corollary}{Corollary}[section]
\newtheorem{proposition}{Proposition}[section]
\newtheorem{theorem}{Theorem}[section]
\theoremstyle{definition}
\newtheorem{definition}[theorem]{Definition}
\title{About projectivisation of Mumford semistable bundles over a curve}
\author{J. Keller}
\begin{document}

\maketitle

\begin{abstract}
We study the existence of canonical K\"ahler metrics on the projectivisation of strictly Mumford semistable vector bundles over a curve.  
\end{abstract}

Consider the projectivisation $\PP(E)$ of a holomorphic vector bundle $E$ over a smooth base manifold $B$ polarized by the ample line bundle $L_B$. Various results have established a relationship between the stability of the underlying bundle $E$ and the existence of K\"ahler metrics with special curvature on $\PP(E)$, at least when $c_1(L_B)$ can be endowed with an extremal metric. The case of a base manifold of complex dimension 1 has been intensively studied. Building on the work of D. Burns- P. de Bartolomeis, E.\ Calabi, A.\ Fujiki, C.\ Lebrun and many others, V.\ Apostolov, D.\ Calderbank, P.\ Gauduchon and C.\ T\o nnesen-Friedman have provided a complete understanding of the situation for stable or polystable bundles over a smooth Riemann surface. They showed that there is a K\"ahler metric with constant scalar curvature (cscK metric in short) in any K\"ahler class on $\PP(E)$ if and only if the bundle $E$ is Mumford polystable \cite{ACGT,ACGT2,AT1}. Another approach was also carried out in a series of paper of Y.-J.\ Hong, see \cite{H} who investigated the case of higher dimensional base. Other results for ruled manifolds appeared recently in relationship with extremal K\"ahler metrics in \cite{Bro,LS}. Up to our knowledge, the case of strictly semistable bundle is still open in complete generality. We expect that for a base manifold of dimension $\geq 2$ all the phenomena of stability for $\PP(E)$ could happen when $E$ is Mumford semistable (see for instance \cite{KR1}).  \\
In this note, we essentially study the particular case of a ruled surface given by the projectivisation of a Mumford semistable vector bundle (which is not stable) over a Riemann surface of genus $g\geq 2$. Some partial generalizations are given for higher dimensional base (note that the results of Section \ref{cscK conic} will be extended in a forthcoming paper).

\noindent {\bf Conventions}: If $\pi\colon E\to B$ is a vector bundle then $\pi\colon \PP(E)\to B$ shall denote the space of complex \emph{hyperplanes} in the fibres of $E$.  Thus $\pi_* \mathcal O_{\mathbb P(E)}(r) = S^r E$ for $r\ge 0$.

\section{Almost constant scalar curvature metric and K-semistability}
\subsection{Definition of an almost cscK metric}
Let $X$ be a polarized manifold by an ample line bundle $L$. In \cite{D4}, S.K. Donaldson introduced the notion of a test configuration for the couple $(X,L)$. Essentially, it consists in a $\mathbb{C}^*$-equivariant flat projective family $\pi:(\mathcal{X},\mathcal{L})\rightarrow \mathbb{C}$ with generic fibre $\pi^{-1}(t)$ isomorphic to $(X,L)$ for $t\neq 0$ while the central fibre, that may be singular, has a $\mathbb{C}^*$ action. Using this action, one can define a numerical invariant $F_1(\mathcal{X},\mathcal{L})$, which generalizes the Futaki invariant. Then $(X,L)$ is said to be K-stable if $F_1(\mathcal{X},\mathcal{L})>0$ for any non trivial test-configurations with general fibre $(X,L)$. We refer to \cite{RT, Thnotes} for details about this definition and to \cite{Stoppa2} for the notion of non trivial test-configuration. The Yau-Tian-Donaldson conjecture asserts that the existence of a cscK in the class $c_1(L)$ is equivalent to the K-stability of $(X,L)$.\\
It is natural to ask what is happening at the limit case for K-semistability. \\
Let us denote $\hat{s}_L$ the average of the scalar curvature in the class $c_1(L)$, which is a topological invariant. For $\omega$ a K\"ahler metric in $c_1(L)$, we denote $\Scal(\omega)$ its scalar curvature. From the main result of \cite{D5}, we know that if there exists a sequence of metric $\omega_\epsilon$ such that $$\Vert\Scal(\omega_\epsilon)-\hat{s}_L\Vert_{L^2}\rightarrow 0,$$ then the manifold is K-semistable. The converse is an open question as far as we know.

\begin{definition}
 Given $(X,L)$ a projective manifold, we say that there exists an almost cscK metric in the class $c_1(L)$ in $\C^r$ topology $(r\in \mathbb{N})$ if there is a family of K\"ahler metrics $\omega_\epsilon\in c_1(L)$ such that $$\Vert\Scal(\omega_\epsilon)-\hat{s}_L\Vert_{\C^r}\rightarrow 0$$ when $\epsilon\rightarrow 0$. 
\end{definition}
In the case of the anticanonical class, this definition appeared first in \cite{Bando} where it is related to the existence of a lower bound for the Mabuchi K-energy. Obviously, from Donaldson's result, a manifold $(X,L)$ endowed with an almost cscK metric is K-semistable.

\subsection{Construction of almost cscK metric}\label{Sect1}
Let $E$ be an irreducible Mumford semistable vector bundle of rank 2 over a polarized manifold $(B,L_B)$, given by a non-split exact sequence of line bundles 
$$0\rightarrow L_1 \rightarrow E \rightarrow L_2 \rightarrow 0,$$
with $c_1(L_1)=c_1(L_2)$. Let us assume that $h_1,h_2$ are projectively flat metrics on $L_1,L_2$ satisfying $F_{h_1}=\deg_L(L_1)\omega=F_{h_2}$ with $\omega$ a cscK metric in the class $c_1(L_B)$.
Consider the holomorphic structure on $E$ that has the following form 
$$\bar{\partial}_E=\begin{pmatrix}
                    \bar{\partial}_{L_1} & \alpha \\
                    0  & \bar{\partial}_{L_2}
                   \end{pmatrix}
$$
where $\alpha$ is a smooth section of $\Omega^{0,1}(Hom(L_1,L_2))$, see \cite[Chapter I, Section 6]{Kob}. Then
one has for the curvature of $E$, and denoting $\mu(E)$ the slope of $E$,
\begin{align*}
\Vert  F_{E}- \mu(E)\Id_E\omega\Vert_{\C^r} \leq&\Vert  F_{h_1}- \deg(L_1)\omega \Vert_{\C^r} + \Vert  F_{h_2}- \deg(L_2)\omega\Vert_{\C^r}\\
& + 2\Vert \alpha\Vert_{\C^r}^2 + 2\Vert \dbar^* \alpha\Vert_{\C^r}^2
\end{align*}
We can do a gauge change of the form $g=\begin{pmatrix} \xi & 0 \\ 0 & \xi^{-1} \end{pmatrix}$ and we obtain  $g(\bar{\partial}_E)=\begin{pmatrix}
                    \bar{\partial}_{L_1} & \xi^{-2}\alpha \\
                    0  & \bar{\partial}_{L_2}
                   \end{pmatrix}.$
For any any $\epsilon>0$ and for any $r>0$, we can find the gauge transformation $\xi$ such that $$2\xi^{-2}(\Vert \alpha\Vert_{\C^r}^2 + \Vert \dbar^* \alpha\Vert_{\C^r}^2)< \epsilon.$$ This provides a structure $h_E$ (depending on the parameters $\epsilon,r$) such that $$ \Vert F_{E,h_E}-\mu(E)\Id_E\omega\Vert_{\C^r}< \epsilon.$$ Note that fixing the holomorphic structure with variation of the metric, or fixing the metric with variation of the holomorphic structure is geometrically equivalent in this setup. Therefore, we have obtained an approximate Hermitian-Einstein structure in the sense of \cite[Chapter IV]{Kob}, from which  we deduce an almost cscK metric on $\PP(E)$ using the next lemma.\\
From now we assume that $E$ is ample, that is $\mathcal{O}_{\PP(E)}(1)$ is a positive line bundle (without loss of generality we can tensorize $E$ by a sufficiently ample line bundle $L_C$, use the identification $\PP(E\otimes L_C)\simeq \PP(E)$ and the induced approximate Hermitian structure). 
\begin{lemma}\label{tech}
 From $h_E$ hermitian metric on the bundle $E$, one can define a metric $\hat{h}_E$ on $\mathcal{O}_{\PP(E)}(1)$ which curvature is denoted $\hat{\omega}_E$ and is a K\"ahler form. Then at  $v\in \PP(E)$, with $\pi(v)=x\in C$, one has pointwise
$$\hat{\omega}_E = \pi^* \left(\frac{\sqrt{-1}}{\Vert v\Vert^2_{h_E}}\langle F_{E,h_E}(v),v\rangle_{h_E}\right)+{\omega_{FS}}_{\vert \PP(E)_x}$$
and ${\omega_{FS}}_{\vert \PP(E)_x}$ is the Fubini-Study metric at $\PP(E)_x$. 
\end{lemma}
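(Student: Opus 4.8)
\emph{Plan.} The asserted identity is the classical description of the Chern curvature of the metric that $h_E$ induces on $\mathcal{O}_{\PP(E)}(1)$, so I would establish it in three moves: build the induced metric from the tautological quotient, compute its curvature with the Gauss (second--fundamental--form) equation, and identify the two resulting blocks with the two terms in the statement.

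\emph{Step 1: the induced metric.} With the paper's convention $\PP(E)$ is the bundle of hyperplanes in the fibres of $E$, equivalently of rank-one quotients, and it carries the universal sequence $0\to S\to \pi^*E\to \mathcal{O}_{\PP(E)}(1)\to 0$, where over a point corresponding to a hyperplane $H\subset E_x$ one has $S=H$ and $\mathcal{O}_{\PP(E)}(1)=E_x/H$ (that $\pi_*$ of this sequence returns $E$ is what identifies the quotient with $\mathcal{O}_{\PP(E)}(1)$). Endow $\pi^*E$ with $\pi^*h_E$ and let $\hat h_E$ be the quotient metric on $\mathcal{O}_{\PP(E)}(1)$; equivalently, via the $h_E$--orthogonal splitting $\pi^*E=S\oplus S^\perp$, identify $(\mathcal{O}_{\PP(E)}(1),\hat h_E)$ isometrically with the line subbundle $S^\perp$, whose fibre over $H$ is the line $\mathbb{C}\,v\subset E_x$ spanned by an $h_E$--orthogonal vector $v$ to $H$. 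This is the vector $v$ of the statement (the right-hand side is homogeneous of degree $0$ in $v$, so it descends to $\PP(E)$), and $\|v\|_{h_E}$, $\langle F_{E,h_E}(v),v\rangle_{h_E}$ are then taken with $h_E$ itself. Write $\hat\omega_E$ for the Chern curvature of $(\mathcal{O}_{\PP(E)}(1),\hat h_E)$.

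\emph{Step 2: curvature and its decomposition.} I would invoke the Gauss equation for a quotient bundle (see \cite{Kob}): for a Hermitian holomorphic $(V,g)$ with subbundle $S$ and quotient $Q\cong S^\perp$, the Chern curvature of $Q$ equals $\mathrm{pr}_{S^\perp}\!\circ\Theta_V|_{S^\perp}$ plus a nonnegative term built from the second fundamental form $\beta\in A^{1,0}(\mathrm{Hom}(S,Q))$. Apply this with $V=\pi^*E$, $g=\pi^*h_E$, so that $\Theta_V=\pi^*F_{E,h_E}$ is pulled back from $C$, and introduce the splitting $T\PP(E)=\mathcal{H}\oplus\mathcal{V}$ with $\mathcal{V}=\ker d\pi$ and $\mathcal{H}$ the horizontal distribution of the Chern connection of $(E,h_E)$. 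The proof then rests on three points: (i) $\beta$ vanishes on $\mathcal{H}^{1,0}$, because along horizontal directions the subspace $S=H$ is parallel; hence the second--fundamental--form term is of type $\mathcal{V}\otimes\overline{\mathcal{V}}$ while $\pi^*F_{E,h_E}$ is of type $\mathcal{H}\otimes\overline{\mathcal{H}}$, so $\hat\omega_E$ is block diagonal for $\mathcal{H}\oplus\mathcal{V}$ --- this is precisely the sense in which the claimed equality holds \emph{pointwise}; (ii) over a fibre $\PP(E)_x\cong\PP(E_x)$, where $\pi^*E$ is the constant bundle $E_x$, the quotient metric on $\mathcal{O}_{\PP(E_x)}(1)$ is by definition the Fubini--Study metric of the Hermitian space $(E_x,h_E)$, so the second--fundamental--form term restricts to ${\omega_{FS}}_{\vert \PP(E)_x}$; (iii) the horizontal block $\mathrm{pr}_{S^\perp}\!\circ\pi^*F_{E,h_E}|_{S^\perp}$, being an $\mathrm{End}$--valued $2$--form on a line, is the scalar $\langle F_{E,h_E}(v),v\rangle_{h_E}/\|v\|^2_{h_E}$ whose $2$--form part lives on $C$ and is hence pulled back by $d\pi$, the factor $\sqrt{-1}$ being the usual normalisation of the curvature. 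Combining (i)--(iii) gives the formula. I would verify (ii)--(iii) by a direct local computation in a holomorphic frame $(e_1,e_2)$ of $E$ that is $h_E$--normal at $x=\pi(v)$ and adapted so that $v=e_2$ and the fibre coordinate $\mu$ (with $S$ spanned by $\sigma=e_1+\mu e_2$) vanishes at $v$: then $\hat h_E$ has local weight $\det(h_E)/\|\sigma\|^2_{h_E}$, so $\hat\omega_E=\sqrt{-1}\,\partial\bar\partial\log\bigl(\|\sigma\|^2_{h_E}/\det h_E\bigr)$; the mixed $dz\wedge d\bar\mu$ terms vanish at $v$ because $dh_{i\bar j}(x)=0$, the $d\mu\wedge d\bar\mu$ block there equals $\sqrt{-1}\,\partial\bar\partial\log(1+|\mu|^2)$, and the $dz\wedge d\bar z$ block reduces to $\sqrt{-1}\langle F_{E,h_E}\,e_2,e_2\rangle_{h_E}$, which is $\sqrt{-1}\langle F_{E,h_E}(v),v\rangle_{h_E}/\|v\|^2_{h_E}$ since $\|v\|_{h_E}=1$ in this frame.

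\emph{Step 3: Kählerness, and the main difficulty.} Finally $\hat\omega_E$ is positive, hence Kähler: the vertical block is the positive Fubini--Study form, and positivity of the horizontal block is guaranteed by the ampleness hypothesis --- after replacing $E$ by $E\otimes L_C$ with $L_C$ sufficiently ample, which leaves $\PP(E)$ unchanged and adds to $\hat\omega_E$ the pullback of a large positive form on $C$; in the setting of Section \ref{Sect1}, $h_E$ is moreover an approximate Hermitian--Einstein metric, so the horizontal block is $\C^0$--close to $\mu(E)\,\omega>0$ and positivity is immediate. I do not expect any real obstacle here beyond bookkeeping: one must get the signs in the Gauss equation right, handle the ``$\PP(E)=$ hyperplanes'' convention so that the final answer is expressed through $v\in E$ and $F_{E,h_E}$ rather than their duals, and check that the fibre contribution comes out as the correctly normalised Fubini--Study form rather than a multiple of it; the normal--frame computation of Step 2 is the most economical way to certify all of this.
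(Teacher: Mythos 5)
The paper gives no proof of this lemma at all --- it simply refers to Demailly (Chapter V, \S 15.C) --- and your argument is correct and is essentially the standard computation found in that reference: the quotient metric on $\mathcal{O}_{\PP(E)}(1)$ coming from the universal sequence $0\to S\to\pi^*E\to\mathcal{O}_{\PP(E)}(1)\to 0$, the Gauss equation splitting $\hat\omega_E$ into a horizontal block $\langle F_{E,h_E}(v),v\rangle_{h_E}/\Vert v\Vert^2_{h_E}$ (since $\beta$ vanishes on the horizontal distribution of the Chern connection) and a vertical second-fundamental-form block equal to the Fubini--Study form, all certified by your normal-frame computation with weight $\Vert\sigma\Vert^2_{h_E}/\det h_E$. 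Nothing further is needed.
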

We refer to \cite[Chapter V, §15.C]{De2} for a proof. A direct consequence of the previous lemma and the existence of an approximate Hermitian-Einstein structure with respect to a cscK metric is the following proposition.
\begin{proposition}\label{prop0}
 Let $E\rightarrow B$ be an ample Mumford semistable rank 2 vector bundle induced by a non-split exact sequence of projectively flat line bundles as above over a cscK polarized manifold $(B,L)$.  Then for any $r\in \mathbb{N}$, there is an almost cscK metric on the ruled surface  $\pi\colon \PP(E)\to B$ in $\C^r$ topology.
\end{proposition}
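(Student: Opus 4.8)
The plan is to combine the approximate Hermitian--Einstein structure produced above with Lemma~\ref{tech}, and to degenerate the whole construction to the polystable bundle $L_1\oplus L_2$, whose projectivisation does carry a cscK metric. First I would recast the construction of Section~\ref{Sect1} as a variation of the holomorphic structure with the Hermitian metric held fixed: on the underlying $\C^\infty$ bundle $E$ carrying the fixed metric $h_0=h_1\oplus h_2$ and the cscK form $\omega$ on $B$, let $\dbar^{(\epsilon)}$ denote $\dbar_E$ with $\alpha$ replaced by $\epsilon\alpha$, for $\epsilon\in[0,1]$, so that $\dbar^{(1)}=\dbar_E$, while for $\epsilon\neq 0$ the structure $\dbar^{(\epsilon)}$ is gauge equivalent to $\dbar_E$ through $g=\mathrm{diag}(\epsilon^{-1/2},\epsilon^{1/2})$, and $\dbar^{(0)}$ is the split structure. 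The curvature $F^{(\epsilon)}$ of $(\dbar^{(\epsilon)},h_0)$ is polynomial in $\epsilon$ with $\C^\infty$ coefficients on $B$, hence depends smoothly on $\epsilon$ down to $\epsilon=0$, where $F^{(0)}=\mu(E)\Id_E\,\omega$; therefore the complex structure $J_\epsilon$ of $\PP(E,\dbar^{(\epsilon)})$ and, through the pointwise formula of Lemma~\ref{tech}, the Kähler metric $\hat\omega_\epsilon$ associated to $h_0$ also depend smoothly on $\epsilon$. (Ampleness of $E$, hence positivity of $\hat\omega_\epsilon$, may be presupposed after replacing $E$ by $E\otimes L_C$ with $L_C$ sufficiently ample and using $\PP(E\otimes L_C)\simeq\PP(E)$.)

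Next I would identify the limit metric. At $\epsilon=0$ the defect vanishes, the coefficient in Lemma~\ref{tech} reduces to the $v$-independent form $\sqrt{-1}\,\mu(E)\,\omega$, and $\hat\omega_0=\pi^*\big(\sqrt{-1}\,\mu(E)\,\omega\big)+\omega_{FS}|_{\PP(L_1\oplus L_2)_x}$ is a Kähler metric on $\PP(L_1\oplus L_2)$. Since $h_1\oplus h_2$ is projectively flat, $L_1\oplus L_2$ carries a flat projective connection, and over a flat local trivialisation $\PP(E)|_U\simeq U\times\PP^1$ the metric $\hat\omega_0$ becomes the Riemannian product of $\mu(E)\,\omega$ with the standard Fubini--Study metric on $\PP^1$. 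As scalar curvature is a local invariant and both factors are cscK --- the base by hypothesis, the fibre trivially --- the metric $\hat\omega_0$ is cscK, and its constant value is the average scalar curvature $\hat s$, which depends only on the (fixed) polarisation class and on $c_1$ of the complex structure and is therefore the same for every $\epsilon\in[0,1]$.

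The conclusion should then follow by continuity: on the fixed underlying manifold of $\PP(E)$ one has $(\hat\omega_\epsilon,J_\epsilon)\to(\hat\omega_0,J_0)$ in $\C^\infty$ as $\epsilon\to 0$, so the associated Riemannian metrics and hence their scalar curvatures converge in every $\C^r$, and since $\Scal_{J_0}(\hat\omega_0)=\hat s$ we obtain $\Vert\Scal(\hat\omega_\epsilon)-\hat s\Vert_{\C^r}\to 0$. This provides, for each $\epsilon$, a Kähler metric $\hat\omega_\epsilon$ on the polarised manifold $(\PP(E,\dbar^{(\epsilon)}),\mathcal O(1))$ with small scalar-curvature defect; since for $\epsilon\neq 0$ this polarised manifold is isomorphic to $(\PP(E),\mathcal O_{\PP(E)}(1))$ through the biholomorphism induced by $g=\mathrm{diag}(\epsilon^{-1/2},\epsilon^{1/2})$, pulling the $\hat\omega_\epsilon$ back along these isomorphisms yields the family $\omega_\epsilon\in c_1(\mathcal O_{\PP(E)}(1))$ required by the definition.

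The delicate step --- and the one I expect to be the main obstacle --- is this last transport. The gauge transformations $g=\mathrm{diag}(\epsilon^{-1/2},\epsilon^{1/2})$ are unbounded as $\epsilon\to 0$, and necessarily so: $\PP(E)$ is strictly Mumford semistable, hence carries no genuine cscK metric in $c_1(\mathcal O(1))$, so any almost cscK family on it must leave every compact set of Kähler metrics. Thus the pulled-back metrics $\omega_\epsilon$ degenerate, and $\C^r$-smallness of $\Scal(\omega_\epsilon)-\hat s$ is not automatically preserved if one measures it against a single fixed background metric on $\PP(E)$. The clean way around this is that the biholomorphisms are isometries between $(\PP(E),\omega_\epsilon)$ and the non-degenerating models $(\PP(E,\dbar^{(\epsilon)}),\hat\omega_\epsilon)$, so the estimate transfers once the $\C^r$ norm is understood with respect to $\omega_\epsilon$ itself; if one instead insists on a fixed reference metric, one must use the scalar-curvature formula for bundle-type metrics of the shape of Lemma~\ref{tech}, which expresses $\Scal(\hat\omega_\epsilon)-\hat s$ in terms of the Hermitian--Einstein defect $\sqrt{-1}\Lambda_\omega F^{(\epsilon)}-\mu(E)\Id_E$ --- a tensor living on the base $B$, where nothing degenerates and whose $\C^r$ norm is $O(\epsilon)$ --- and check directly that this smallness survives the identification; that is where the bulk of the analytic work would lie, and why the statement is made for a fixed but arbitrary $r$.
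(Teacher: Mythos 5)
Your proposal is correct and follows essentially the same route as the paper: the paper also rescales the extension class $\alpha$ by a diagonal gauge transformation to produce an approximate Hermitian--Einstein structure (explicitly noting that varying the metric with fixed $\dbar_E$ or varying $\dbar_E$ with fixed metric are equivalent here) and then feeds this into Lemma~\ref{tech} over the cscK base. Your identification of the $\epsilon\to 0$ limit as the cscK product metric on $\PP(L_1\oplus L_2)$ and your discussion of how the $\C^r$ estimate survives the unbounded gauge transport are elaborations of steps the paper treats as a ``direct consequence,'' not a different method.
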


If the base manifold is a curve of genus $g>1$, line bundles are automatically projectively flat, the exact sequence does not split for $L_1$ not isomorphic to $L_2$ since $h^1(C,L_2\otimes L_1^*)=g-1>0$ and there exists a cscK metric on the base manifold. Thus we obtain the next result.
\begin{corollary}\label{cor1}
 Consider $E$ a rank 2 vector bundle on a curve $C$ of genus $\geq 2$. Assume that $E$ is Mumford semistable.  Then for any $r\in \mathbb{N}$, there is an almost cscK metric on the ruled surface  $\pi\colon \PP(E)\to C$ in $\C^r$ topology.
\end{corollary}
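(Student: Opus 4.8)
The plan is to reduce the statement to Proposition \ref{prop0} by a short structural case analysis, the only genuinely external input being the polystable case. First I would recall the shape of a rank $2$ Mumford semistable bundle $E$ over a curve $C$ of genus $g\geq 2$: it is either Mumford polystable (stable, or a direct sum $L_1\oplus L_2$ of two line bundles of equal degree) or indecomposable and strictly semistable. In the latter situation, choosing a sub-line-bundle $L_1\subset E$ of maximal degree, Mumford semistability forces $\deg L_1=\mu(E)$, so that with $L_2:=E/L_1$ one gets $\deg L_1=\deg L_2$, hence $c_1(L_1)=c_1(L_2)$ on the curve, and the extension $0\to L_1\to E\to L_2\to 0$ is non-split precisely because $E\not\cong L_1\oplus L_2$. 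Thus the indecomposable strictly semistable rank $2$ bundles are exactly the non-split extensions of equal-degree line bundles handled in Section \ref{Sect1}.

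If $E$ is Mumford polystable I would simply invoke the theorem of Apostolov--Calderbank--Gauduchon--T\o nnesen-Friedman \cite{ACGT,ACGT2,AT1}: the ruled surface $\PP(E)$ then carries a genuine cscK metric in every K\"ahler class, and such a metric is \emph{a fortiori} an almost cscK metric in $\C^r$ for every $r$, so there is nothing further to do.

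If $E$ is indecomposable and strictly semistable, with non-split extension $0\to L_1\to E\to L_2\to 0$ as above, I would verify the hypotheses of Proposition \ref{prop0} over $C$. Since $g\geq 2$, a suitable rescaling of the uniformization metric provides a cscK representative $\omega$ in any prescribed K\"ahler class of $C$, so a cscK polarization of the base is available; moreover, by Hodge theory every line bundle on $C$ carries a Hermitian metric whose curvature is the constant multiple of $\omega$ determined by its degree, i.e. is projectively flat, which furnishes $h_1,h_2$ with $F_{h_1}=\deg(L_1)\omega=F_{h_2}$. Ampleness of $E$ is no restriction: replacing $E$ by $E\otimes L_C$ with $L_C$ sufficiently positive preserves semistability, the equality of first Chern classes, non-splitness of the extension and projective flatness of sub and quotient, while $\PP(E\otimes L_C)\cong\PP(E)$ inherits the approximate Hermitian--Einstein data used in Section \ref{Sect1}. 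Proposition \ref{prop0} then applies directly and produces, for every $r\in\mathbb{N}$, an almost cscK metric on $\PP(E)$ in $\C^r$ topology.

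I do not expect a serious obstacle here: the analytic heart of the matter — the gauge-change construction of the approximate Hermitian--Einstein structure together with Lemma \ref{tech} — is already carried out in Section \ref{Sect1}, so what remains is essentially bookkeeping, the point requiring a little care being the dichotomy polystable versus indecomposable and the identification of the indecomposable strictly semistable bundles with the extensions covered by Proposition \ref{prop0}, plus the verification that a curve of genus $\geq 2$ supplies both a cscK base metric and projectively flat line bundles. (If one preferred not to quote \cite{ACGT}, the polystable case could instead be treated by feeding the Hermitian--Einstein metric on $E$ into Lemma \ref{tech} and passing to an adiabatic limit, but the cited existence result is shorter and fully sufficient.)
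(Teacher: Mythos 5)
Your proposal is correct and follows essentially the same route as the paper: reduce to Proposition \ref{prop0} in the indecomposable strictly semistable case (exhibiting $E$ as a non-split extension of equal-degree line bundles, and noting that a curve of genus $\geq 2$ supplies a cscK base metric, projectively flat metrics on line bundles, and that ampleness can be arranged by twisting), and quote the known cscK existence result for the remaining polystable case. The only difference is cosmetic: you spell out the dichotomy and treat the stable case explicitly via \cite{ACGT,AT1}, whereas the paper leaves that implicit and only records the decomposable case in the remark following the corollary.
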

Note that if $E$ is not irreducible, then it is actually a direct sum of line bundles and thus we know the existence of a genuine cscK metric on the projectivisation, see for instance \cite{AT1}.

\subsection{Computation of the Donaldson-Futaki invariant}

\begin{proposition}\label{prop}
Consider $E$ an ample irreducible Mumford semistable vector bundle which is not stable over a curve of genus $g>1$. Then $(\PP(E),\mathcal{O}_{\PP(E)}(1))$ is not K-polystable and not asymptotically Chow polystable.  
\end{proposition}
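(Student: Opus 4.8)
The strategy is to produce one explicit test configuration --- the degeneration of $E$ onto its Jordan--H\"older graded bundle --- and to check that it is non-trivial while having both vanishing Donaldson--Futaki invariant and vanishing Chow weight. Since $E$ is Mumford semistable, irreducible and not stable, it sits in a non-split extension $0\to L_1\to E\to L_2\to 0$ with $\deg L_1=\deg L_2=\mu(E)$, and ampleness of $E$ forces $\delta:=\mu(E)>0$. Rescaling the (non-zero) extension class by $t\in\mathbb{C}^*$ yields a flat family $\mathcal{E}$ of rank $2$ bundles on $C\times\mathbb{C}$ with $\mathcal{E}|_{C\times\{t\}}\simeq E$ for $t\neq 0$ and $\mathcal{E}|_{C\times\{0\}}\simeq L_1\oplus L_2$, carrying a lift of the standard $\mathbb{C}^*$-action on the $\mathbb{C}$-factor (the Rees, or deformation-to-the-graded-object, construction). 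Projectivising and polarising by $\mathcal{O}_{\PP(\mathcal{E})}(1)$ --- relatively ample because $L_1\oplus L_2$ is ample --- produces a test configuration $(\mathcal{X},\mathcal{L})$ for $(\PP(E),\mathcal{O}_{\PP(E)}(1))$ whose central fibre is $(\PP(L_1\oplus L_2),\mathcal{O}(1))$.

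This test configuration is not a product configuration: a product configuration for $(X,L)$ has central fibre isomorphic to $X$, but here the central fibre $\PP(L_1\oplus L_2)$ is a decomposable $\PP^1$-bundle whereas $\PP(E)$ is not. Indeed, over a curve of genus $\geq 2$ the ruling of a geometrically ruled surface is intrinsic, so two rank $2$ bundles with isomorphic projectivisations differ by a line-bundle twist (and an automorphism of $C$), an operation preserving indecomposability, while $E$ is irreducible and $L_1\oplus L_2$ is not. In particular $\PP(L_1\oplus L_2)\not\simeq\PP(E)$ and $(\mathcal{X},\mathcal{L})$ is also non-trivial.

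Now we compute the weights. Normalise the induced $\mathbb{C}^*$-action on the central fibre to lie in $SL(2)$, so that it acts on $L_1\oplus L_2$ with weights $(+1,-1)$; then on
$$H^0\!\big(\PP(L_1\oplus L_2),\mathcal{O}(k)\big)=H^0\big(C,S^k(L_1\oplus L_2)\big)=\bigoplus_{j=0}^{k}H^0\big(C,L_1^{\otimes j}\otimes L_2^{\otimes(k-j)}\big)$$
it acts with weight $2j-k$ on the $j$-th summand. Because $\deg L_1=\deg L_2$, every summand has the same degree $k\delta$, hence for $k>(2g-2)/\delta$ is non-special of dimension $k\delta+1-g$ by Riemann--Roch, and the total weight is
$$w(k)=(k\delta+1-g)\sum_{j=0}^{k}(2j-k)=0.$$
So the weight polynomial vanishes identically for $k\gg 0$. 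Therefore $F_1(\mathcal{X},\mathcal{L})$, a fixed linear combination of the two leading coefficients of $w(k)$ against those of the Hilbert polynomial of $\PP(E)$, vanishes (equivalently, the central fibre being smooth, it equals the classical Futaki invariant of the $\mathbb{C}^*$-action on $\PP(L_1\oplus L_2)$, which is cscK in every K\"ahler class by \cite{ACGT,AT1} and hence has trivial Futaki invariant); and for every large $k$ the Hilbert--Mumford weight of this $\mathbb{C}^*$ on the Chow point of $(\PP(L_1\oplus L_2),\mathcal{O}(k))$ vanishes as well.

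Combining the three steps: $(\PP(E),\mathcal{O}_{\PP(E)}(1))$ admits a non-product test configuration with $F_1=0$, so it is not K-polystable; and for all $k\gg 0$ the $\mathbb{C}^*$ above sends the Chow point of $(\PP(E),\mathcal{O}(k))$ to that of $(\PP(L_1\oplus L_2),\mathcal{O}(k))$ with zero weight, while the two surfaces are not isomorphic, so these Chow points lie in distinct orbits, the orbit of the former is not closed, and $(\PP(E),\mathcal{O}(k))$ is not Chow polystable --- i.e. $(\PP(E),\mathcal{O}_{\PP(E)}(1))$ is not asymptotically Chow polystable. Conceptually the result merely says that the almost-cscK metrics of Corollary~\ref{cor1} cannot be upgraded to genuine cscK ones; the only points requiring care are checking that the Rees construction really yields the central fibre $\PP(L_1\oplus L_2)$ with the diagonal $\mathbb{C}^*$-action, and the standard non-isomorphism $\PP(L_1\oplus L_2)\not\simeq\PP(E)$ which shuts off the ``product'' loophole in the definition of K-polystability --- the weight computation itself is forced by the slope equality $\deg L_1=\deg L_2$ and uses nothing beyond Riemann--Roch.
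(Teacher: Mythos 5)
Your argument is correct in substance but follows a genuinely different route from the paper. The paper disposes of the proposition in three lines by quoting Ross--Thomas (Theorem 5.13 of \cite{RT}): the test configuration there is the \emph{deformation to the normal cone} of $\PP(F)$ for a destabilising subbundle $F$, whose Donaldson--Futaki invariant is a multiple of $\mu(E)-\mu(F)=0$; asymptotic Chow polystability is then ruled out by citing \cite{DV1}. You instead build the test configuration by degenerating $E$ to its Jordan--H\"older graded object $L_1\oplus L_2$ via the Rees construction on the extension class, and you compute the weight polynomial by hand. What your approach buys is a self-contained, elementary computation (the identity $\sum_{j=0}^k(2j-k)=0$ plus Riemann--Roch does all the work), an explicit smooth non-product central fibre, and a transparent GIT argument for the Chow statement (zero-weight degeneration to a non-isomorphic cycle, hence non-closed orbit); the non-isomorphism $\PP(L_1\oplus L_2)\not\simeq\PP(E)$ via uniqueness of the ruling for $g\ge 2$ is correctly handled, and this is exactly the point that closes the ``product configuration'' loophole. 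What the paper's route buys is generality and brevity: the slope computation of \cite{RT} applies verbatim to any subsheaf of any rank.

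One caveat you should address: the proposition as stated does not restrict the rank of $E$, whereas your opening sentence (``$E$ sits in a non-split extension of line bundles of equal degree'') is only valid for $\rk E=2$. For higher rank the same strategy works --- degenerate $E$ to $F\oplus E/F$ for $F$ a slope-equal stable subbundle, act with $SL$-normalised weights, and observe that since all summands of $S^k(F\oplus E/F)$ have slope $k\mu(E)$ the total weight is again $(\,k\mu(E)+1-g\,)$ times the trace of an $SL$-action, hence zero --- but as written your proof covers only the rank $2$ case, which is admittedly the one used in the rest of the paper.
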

\begin{proof}
This is a consequence of \cite[Theorem 5.13]{RT}, where it is done a computation of the Donaldson-Futaki invariant for the test configuration induced by a deformation to the normal cone of $\PP(F)$ where $F$ is any subbundle of $E$. This computation  shows that the Donaldson-Futaki invariant for such a test-configuration is a multiple of the differences of slopes $\mu(E)-\mu(F)$. Remark that with \cite[Proposition 4.1, Theorem 4.5]{DV1}, it is also proved that $\mathcal{O}_{\PP(E)}(1)$ is not asymptotically Chow polystable.  
\end{proof}

\begin{proposition}\label{prop3}
 Assume that  $E\rightarrow B$ an ample Mumford semistable rank 2 vector bundle induced by a non-split exact sequence of projectively flat line bundles over a cscK polarized manifold $(B,L)$, as constructed in Section \ref{Sect1}. Then $(\PP(E),\mathcal{O}_{\PP(E)}(1))$ is not K-polystable and not asymptotically Chow polystable.  
\end{proposition}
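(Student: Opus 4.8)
The plan is to mimic the proof of Proposition~\ref{prop} in the higher-dimensional setting: exhibit a non-product test configuration for $(\PP(E),\mathcal O_{\PP(E)}(1))$ with vanishing Donaldson--Futaki invariant, obtained by deforming to the normal cone of the section cut out by the destabilizing sub-line-bundle $L_1$. Since $L_1\hookrightarrow E$ is a subbundle of corank $1$, it determines a section $\sigma\colon B\to\PP(E)$ whose image $Z:=\sigma(B)\cong B$ is a smooth irreducible divisor in the $\PP^1$-bundle $\PP(E)\to B$; because the sequence does not split, $Z$ is a genuine destabilizing divisor and not one of the two disjoint sections of a split bundle. Deforming to the normal cone of $Z$ --- the blow-up of $\PP(E)\times\mathbb C$ along $Z\times\{0\}$, polarized by $\mathcal O_{\PP(E)}(k)$ twisted by a small negative multiple of the exceptional divisor, for $k\gg 0$ --- produces a test configuration for $(\PP(E),\mathcal O_{\PP(E)}(1))$ in the sense of \cite{RT}.

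Next I would compute its Donaldson--Futaki invariant by the slope formula of \cite[Theorem 5.13]{RT}. The extra hypotheses of the statement are exactly what is needed here: the projective flatness of $h_1,h_2$ makes $\PP(E)$ fibrewise as symmetric as possible, with $Z$ totally geodesic for the fibrewise Fubini--Study data appearing in Lemma~\ref{tech}, while the cscK assumption on $(B,L_B)$ annihilates the base-curvature corrections that arise in dimension $\geq 2$. The upshot should be that, exactly as over a curve, the Donaldson--Futaki invariant equals a strictly positive multiple of the slope difference $\mu_{L_B}(E)-\mu_{L_B}(L_1)$ taken with respect to $c_1(L_B)$. Since $c_1(L_1)=c_1(L_2)$ forces $c_1(E)=2c_1(L_1)$, hence $\mu_{L_B}(E)=\mu_{L_B}(L_1)$, this invariant vanishes identically in the parameter governing the deformation.

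Then I would argue that the test configuration is not a product. If it were, $(\PP(E),\mathcal O_{\PP(E)}(1))$ would carry a nontrivial $\mathbb C^*$-action fixing $Z$ and realizing the degeneration; restricting such an action to the fibres of $\PP(E)\to B$ would split $E$ into the sum of its weight eigen-line-subbundles, contradicting the irreducibility of $E$ (if $B$ itself has positive-dimensional automorphism group, one first notes that any $\mathbb C^*$ implementing this degeneration must act trivially on $Z\cong B$, hence purely in the fibre directions, and concludes as before). Therefore $(\PP(E),\mathcal O_{\PP(E)}(1))$ admits a non-product test configuration with zero Donaldson--Futaki invariant and so is not K-polystable. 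The failure of asymptotic Chow polystability follows the identical pattern, with \cite[Theorem 5.13]{RT} replaced by \cite[Proposition 4.1, Theorem 4.5]{DV1}: the relevant Chow weight of the same family is again a positive multiple of $\mu_{L_B}(E)-\mu_{L_B}(L_1)=0$ for a non-product configuration.

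I expect the main obstacle to be the Donaldson--Futaki (and Chow) computation of the second step: verifying that in dimension $\geq 2$ all the base-geometry correction terms in the slope formula reassemble into a single positive multiple of $\mu_{L_B}(E)-\mu_{L_B}(L_1)$. This is precisely where projective flatness of the $L_i$ and the existence of a cscK metric on $(B,L_B)$ enter, and it is vacuous on a curve, which is why Proposition~\ref{prop} could be quoted directly from \cite{RT} whereas here the reduction has to be carried out by hand.
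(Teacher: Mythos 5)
Your overall strategy --- deform to the normal cone of the locus cut out by $L_1$ and show the Donaldson--Futaki invariant vanishes --- matches the paper's, but the step that actually makes the argument work is missing, and the mechanism you propose for it is not the right one. The paper does not attempt to generalize the slope formula of \cite[Theorem 5.13]{RT} to a higher-dimensional base. Instead it computes $F_1=a_1b_0-a_0b_1$ directly from the Hilbert polynomial $p(r)=h^0(\PP(E),\mathcal O_{\PP(E)}(r))=h^0(B,S^rE)=a_0r^{b+1}+a_1r^b+\dots$ and the weight polynomial $w(r)=\sum_{i=0}^r i\,h^0(B,L_1^i\otimes L_2^{r-i})=b_0r^{b+2}+b_1r^{b+1}+\dots$ attached to this test configuration. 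Because $c_1(L_1)=c_1(L_2)$ as cohomology classes, each summand $h^0(B,L_1^i\otimes L_2^{r-i})$ is (asymptotically, by Riemann--Roch) independent of $i$, so $w(r)$ agrees with $\tfrac{r}{2}\,p(r)$ in the two leading coefficients; hence $b_0/a_0=b_1/a_1$, $F_1=0$, and the same proportionality kills the Chow weight. This is a purely algebro-geometric computation.

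By contrast, you want the Donaldson--Futaki invariant to come out as ``a strictly positive multiple of $\mu_{L_B}(E)-\mu_{L_B}(L_1)$'' and you invoke the projective flatness of $h_1,h_2$ and the cscK metric on the base to ``annihilate the base-curvature corrections.'' Two problems. First, the Donaldson--Futaki invariant and the Chow weight are algebraic invariants of the polarized family; they cannot see the choice of Hermitian metrics on $L_1,L_2$ or the existence of a cscK metric on $(B,L_B)$, so these hypotheses cannot enter the computation --- what the proof actually uses is the cohomological condition $c_1(L_1)=c_1(L_2)$ built into the Section \ref{Sect1} setup. Second, in base dimension $\geq 2$ mere equality of slopes $\mu_{L_B}(E)=\mu_{L_B}(L_1)$ (an equality of single intersection numbers against $c_1(L_B)^{b-1}$) is strictly weaker than the class equality $c_1(L_1)=c_1(L_2)$, and one should not expect the Donaldson--Futaki invariant to reduce to a multiple of the slope difference alone: the subleading coefficients of $p$ and $w$ involve the full Chern classes, and they cancel here only because the two polynomials are genuinely proportional. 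You flag exactly this computation as ``the main obstacle'' and leave it undone, so the core of the proof is absent. (Your argument that the configuration is not a product, via the induced fibrewise $\mathbb{C}^*$-action splitting $E$ against irreducibility, is fine and is the part the paper leaves implicit.)
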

\begin{proof}
Let us denote $b=\dim_\mathbb{C} B$. We compute the Donaldson-Futaki invariant $F_1$ for the test configuration induced by a deformation to the normal cone of $\PP(L_1)$. Note that $\mu(L_1)=\mu(E)$. As explained in \cite{RT} (see also \cite{KR1}), $F_1=a_1b_0-a_0b_1$
where one has defined
\begin{align*}
p(r)&=h^0(\PP(E),\mathcal{O}_{\PP(E)}(r))=a_0r^{b+1}+a_1r^b+...\\
w(r)&=\sum_{i=0}^r ih^0(B,L_1^i\otimes L_2^{r-i})=b_0r^{b+2}+b_1r^{b+1}+...
\end{align*} 
Now, under our assumptions, using the fact that $c_1(E)=2c_1(L_1)$, the polynomials $p$ and $w$ are proportionals. This shows that $F_1$ vanishes and a similar reasoning can be done to get that the Chow weight associated to this test configuration also vanishes. 
\end{proof}

\begin{corollary}\label{cor}
 There exist examples $(X,L)$ of polarized manifolds such that $L$ is K-semistable and not K-polystable. In particular, there are examples of non convergent sequence of almost cscK metrics. For any irreducible Mumford semistable bundle $E$ (not Mumford stable) of rank 2 over a curve of genus $g\geq 2$, there are integral classes on the ruled surface $X=\PP(E)$ that are K-semistable and not K-polystable.  
\end{corollary}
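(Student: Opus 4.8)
The plan is to assemble the results established above. Fix an irreducible Mumford semistable rank-$2$ vector bundle $E$ over a curve $C$ of genus $g\ge 2$ that is not Mumford stable. Since tensoring by a line bundle preserves semistability, irreducibility and the property of not being stable, and since $\PP(E\otimes L_C)\simeq\PP(E)$, I may choose $L_C$ sufficiently ample so that $F:=E\otimes L_C$ is ample, and work with $X=\PP(E)=\PP(F)$ equipped with the integral polarization $L=\mathcal{O}_{\PP(F)}(1)=\mathcal{O}_{\PP(E)}(1)\otimes\pi^*L_C$. The bundle $F$ is again irreducible, Mumford semistable and not stable.

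First I would apply Corollary \ref{cor1} to $F$ (equivalently Proposition \ref{prop0}): for every $r\in\mathbb{N}$ there is a family $\omega_\epsilon\in c_1(L)$ with $\Vert\Scal(\omega_\epsilon)-\hat{s}_L\Vert_{\C^r}\to 0$, hence in particular with $\Vert\Scal(\omega_\epsilon)-\hat{s}_L\Vert_{L^2}\to 0$. By Donaldson's theorem \cite{D5} recalled in Section \ref{Sect1}, $(X,L)$ is therefore K-semistable. Then I would apply Proposition \ref{prop} to $F$: being ample, irreducible, Mumford semistable and not Mumford stable over a curve of genus $g>1$, the pair $(\PP(F),\mathcal{O}_{\PP(F)}(1))=(X,L)$ is not K-polystable. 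This already establishes the first assertion (a K-semistable, non-K-polystable $(X,L)$) and the third assertion, with integral classes $c_1(\mathcal{O}_{\PP(E)}(1)\otimes\pi^*L_C)$ on $X=\PP(E)$ for $L_C$ ranging over sufficiently ample line bundles.

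For the statement on non-convergent sequences I would argue by contradiction. If any almost cscK family $\omega_\epsilon\in c_1(L)$ — in particular one of those just constructed — converged, say in $\C^2$, to a K\"ahler metric $\omega_0\in c_1(L)$, then $\Scal(\omega_0)=\hat{s}_L$, so $\omega_0$ would be a genuine cscK metric in $c_1(L)$. By the known implication (Stoppa) that the existence of a cscK metric in $c_1(L)$ forces $(X,L)$ to be K-polystable — alternatively, by the Apostolov--Calderbank--Gauduchon--T\o nnesen-Friedman classification \cite{ACGT,ACGT2,AT1}, since a cscK metric on a ruled surface over a curve of genus $\ge 2$ forces the bundle to be Mumford polystable, and an indecomposable bundle is polystable only if stable — this contradicts Proposition \ref{prop}, respectively the choice of $E$. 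Hence the constructed sequences do not converge to a cscK metric, which is the required example of a non-convergent sequence of almost cscK metrics.

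The content here is entirely borrowed from Section \ref{Sect1} and the quoted theorems; the only genuine care needed is bookkeeping. One must run the approximate Hermitian--Einstein construction of Section \ref{Sect1} and Lemma \ref{tech} on the \emph{ample} twist $F=E\otimes L_C$, so that the resulting K\"ahler forms really represent the integral class $c_1(\mathcal{O}_{\PP(F)}(1))$ for which Proposition \ref{prop} establishes the failure of K-polystability, and one must use Donaldson's theorem and Proposition \ref{prop} with the same sign convention for the Donaldson--Futaki invariant ($F_1\ge 0$ for all test configurations, with equality for the deformation to the normal cone of $\PP(L_1)$). With those conventions fixed there is nothing further to prove.
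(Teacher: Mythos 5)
Your argument is correct and follows essentially the same route as the paper's proof: Corollary \ref{cor1} plus Donaldson's theorem for K-semistability, Proposition \ref{prop} for the failure of K-polystability, and a contradiction argument for non-convergence. The one substantive difference is the final step: the paper explicitly notes that $\Aut(\PP(E))$ is trivial and then cites Donaldson and Stoppa to conclude that a cscK limit would make $(X,L)$ K-stable, whereas your primary route invokes the implication ``cscK $\Rightarrow$ K-polystable'' without any automorphism hypothesis (a statement stronger than Stoppa's theorem, which requires discrete automorphisms); your fallback via the Apostolov--Calderbank--Gauduchon--T\o nnesen-Friedman classification is sound and sidesteps this. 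Your explicit handling of the ampleness twist $E\otimes L_C$ is a point the paper leaves implicit and is a welcome clarification.
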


\begin{proof}
We fix a bundle as in the statement and apply Corollary \ref{cor1} to produce a sequence of an almost cscK metric. The existence of such metric implies in turn that $(\PP(E),\mathcal{O}_{\PP(E)}(1)))$ is K-semistable from \cite{D5}. On another hand, the automorphism group  $Aut(\PP(E))$ is actually trivial, see \cite{S}. Therefore if the sequence of almost cscK metric was convergent, it would converge towards a cscK metric and $(X,L)$ would be K-stable by  \cite{D1,Stoppa1}. This would contradict Proposition \ref{prop}. 
\end{proof}

Corollary \ref{cor} gives a positive answer to a conjecture of J. Stoppa \cite{Sto3}. Our construction can be easily modified to produce examples of K-semistable not K-stable manifolds in any dimension using vector bundles of higher rank over a curve (using an induction argument on their Harder-Narasimhan filtration, see \cite{Ke00}) or using Proposition \ref{prop3}. Note that Fano examples of K-semistable but not K-polystable threefolds have been found by G. Tian by considering small deformations of the Mukai-Umemura threefold. 

\section{Almost balanced metric and Asymptotic Chow semistability}

\subsection{Definition of almost balanced metric}
Let us recall that one can define for $X$ a submanifold of $\mathbb{P}^N$ the center of mass of $X$
as 
$$\mu(X)=\int_X \frac{zz^*}{\vert z\vert^2} d\mu_{FS} - \frac{\Vol(X)}{N+1}\Id\in \sqrt{-1}Lie(SU(N+1))$$
considering $\mathbb{P}^N$ as a co-adjoint orbit in the Lie algebra of $SU(N+1)$. The Chow weight of $X$ with respect to $A$, hermitian matrix, 
is $$FCh(A,X)=\Tr(\mu(X)\cdot A)=\int_X \frac{z^* A z}{\vert z \vert^2}d\mu_{FS} - \frac{\Vol(X)}{N+1}\Tr(A)$$
and the definition can be extended to any algebraic cycles. It is a classical fact, based on Kempf-Ness theory, that $FCh(A,e^{tA}\cdot X)$ is an increasing function of $t$ and we refer \cite[Proposition 5]{D5}  for details. In particular for $\overline{X}$ the limiting Chow cycle of $e^{tA}\cdot X$ as $t\rightarrow -\infty$, we get $$FCh(A,X)\geq FCh(A,\overline{X}).$$ This provides the inequality
\begin{equation}\Vert \mu(V)\Vert_{2} \,\Vert A\Vert_2 \geq -FCh(A,\overline{X}) \label{ineq3} 
\end{equation}
where one has defined the norm $\Vert T\Vert_2^2= \sum \vert \lambda_i\vert^2$ for $\lambda_i$ eigenvalues of the hermitian matrix $T$, taking into account their multiplicities. This is the finite dimensional analogue of the main theorem of \cite{D5} that we used previously.
Let us now introduce a notion of almost-balanced metrics. 
\begin{definition}\label{albal}
 Given $(X,L)$ a projective manifold, we say that there exists a sequence of almost balanced metrics if for all $k>>0$ and all $\epsilon>0$,
there exists a hermitian metric $h_{k,\epsilon}$ on $L^k $ such that the  Bergman function satisfies $$\Big\Vert\rho(h_{k,\epsilon}) - \frac{N_k+1}{\Vol_L(X)}\Big\Vert_{\C^0}\leq \epsilon.$$ 
\end{definition}
Note that $\rho(h_{k,\epsilon})=\sum_{i=1}^{N_k+1} \vert s_i\vert^2_{h_{k,\epsilon}}\in C^{\infty}(X,\mathbb{R}_+)$ for $N_k+1=h^0(L^k)$ and $\{s_i\}_{i=1,..,N_k+1}$ an orthonormal basis of $H^0(L^k)$ with respect to the $L^2$-inner product induced by $h_{k,\epsilon}$. The existence of an almost balanced metric for $(X,L)$ implies, using \eqref{ineq3} and Hilbert-Mumford criterion, that $(X,L)$ is asymptotically Chow stable since the Chow weight of the limiting Chow cycle along a test-configuration cannot be strictly negative (this appears also clearly in \cite[Equation (16)]{D5} where the lower order terms are the higher Chow weights associated to the one-parameter subgroup action).

\subsection{Construction of almost balanced metric}

Let us consider $\hat{h}_\epsilon$ a hermitian metric on $\mathcal{O}_{\PP(E)}(1)$ with the same notations as in the previous section.
Then the Bergman function for $(\mathcal{O}_{\PP(E)}(k),\hat{h}_\epsilon^k)$ has an asymptotic expansion
\begin{equation}\rho(\hat{h}_\epsilon^k)=k^{r}+ k^{r-1}\frac{\Scal(\omega_{\epsilon})}{2}+k^{r-2}a_2+...+k^{r-q}a_q\label{asympt}\end{equation}
where $r$ is the rank of the bundle $E\rightarrow C$ and $\omega_{\epsilon}=c_1(\hat{h}_\epsilon)$.
The writing of \eqref{asympt} means the following inequality holds in $\C^0$-topology (it will be sufficient to work in that topology in the sequel)
\begin{equation}\Big\Vert \rho(\hat{h}_\epsilon^k)- \left( k^{r}+ k^{r-1}\frac{\Scal(\omega_{\epsilon})}{2}+k^{r-2}a_2+...+k^{r-q}a_q\right) \Big\Vert_{\C^0}\leq C_q(\hat{h}_\epsilon)k^{n-q-1}.\label{asympt2}\end{equation} 
The terms $a_i$ involve at most the $(2i-2)$-th first covariant derivatives of the curvature $\omega_{\epsilon}$. 

\begin{lemma}\label{fixed}
 If the metric $\omega_\epsilon$ is bounded from below and bounded in $\C^{2q}$ norm by a constant $\delta$ with some reference metric, then 
the constant $C_q(\hat{h}_\epsilon)$ in Equation \eqref{asympt2} depends actually only on $q$ and the constant $\delta$. 
\end{lemma}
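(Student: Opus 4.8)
The plan is to go through the proof of the asymptotic expansion \eqref{asympt2} and to follow the analytic constants that enter at each stage, verifying that they are all controlled by a fixed lower bound on $\omega_\epsilon$ together with finitely many of its derivatives. Recall that \eqref{asympt} is proved by a purely local argument: near a point $v\in\PP(E)$ one builds peak sections of $\mathcal{O}_{\PP(E)}(k)$ concentrated at $v$ (in the spirit of Tian, Ruan, Lu, Zelditch and Catlin, and of Ma--Marinescu), correcting an approximately holomorphic model section by solving a $\dbar$-equation with the H\"ormander--Demailly $L^2$-estimate, and one then sums the pointwise squared norms of an $L^2$-orthonormal basis and compares with the model Bargmann--Fock kernel. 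Three families of constants govern the remainder: (i) the constant in the $L^2$-estimate for $\dbar$ on $(\mathcal{O}_{\PP(E)}(k),\hat h_\epsilon^{\,k})$; (ii) the constants bounding the Taylor remainders of the local weight of $\hat h_\epsilon$ in Tian's distinguished coordinates and frame; (iii) the elliptic, Sobolev and sub-mean-value constants on holomorphic coordinate balls of a fixed radius. It is enough to bound each of these in terms of $q$ and $\delta$ alone; the fixed reference metric $\omega_0$ is ambient data, not a parameter.

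First I would record the uniform geometry that the hypotheses provide. The lower bound (which I read as $\omega_\epsilon\ge c_0\,\omega_0$ for a fixed $c_0>0$), together with $\Vert\omega_\epsilon\Vert_{\C^{2q}(\omega_0)}\le\delta$ and hence in particular a $\C^{2}$-bound, yields two-sided bounds $c_0\,\omega_0\le\omega_\epsilon\le C(\delta)\,\omega_0$, a uniform bound on the curvature tensor of $\omega_\epsilon$, and therefore a uniform positive lower bound for the injectivity radius and a finite atlas of holomorphic charts of a fixed size on which the coefficients of $\omega_\epsilon$ and their derivatives up to order $2q$ are bounded by $C(\delta)$. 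On each chart, writing $\omega_\epsilon=\ddbar\phi_\epsilon$ locally, the Euclidean trace of $\omega_\epsilon$ is $\Delta\phi_\epsilon$ up to a constant and is bounded in $\C^{2q-1,\alpha}$ for any fixed $\alpha\in(0,1)$, so interior Schauder estimates give, after the usual normalisation, a local weight $\phi_\epsilon$ for $\hat h_\epsilon$ bounded in $\C^{2q+1,\alpha}$ by $C(\delta)$, which is the regularity that the construction of the first $q$ coefficients of \eqref{asympt} and of their $k^{\,r-q-1}$ remainder requires (the precise count of derivatives being the routine bookkeeping that fixes the exponent $2q$ in the hypothesis). Finally, the coefficients $a_i$ for $i\le q$ are, as noted in the text, universal polynomials in the curvature and its covariant derivatives up to order $2i-2\le 2q-2$, hence are themselves bounded by $C(\delta)$.

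Feeding this into the Bergman machinery: for $k$ large the curvature of $(\mathcal{O}_{\PP(E)}(k),\hat h_\epsilon^{\,k})$ is at least $c_0 k\,\omega_0-O(1)\ge\tfrac{c_0}{2}k\,\omega_0$ with the $O(1)$ independent of $\epsilon$, which forces the H\"ormander constant to be $O(k^{-1})$ uniformly in $\epsilon$; the $\C^{2q+1,\alpha}$-control of the local weight makes the Taylor remainders in the peak-section construction bounded by $C(\delta)$ times the appropriate negative power of $k$; and the sub-mean-value inequality used to pass from $L^2$ to $\C^0$ estimates has constants governed by the uniform volume and curvature bounds. Assembling an $L^2$-orthonormal basis out of these peak sections and matching with the model kernel then produces \eqref{asympt2} with a remainder constant depending only on $q$ and $\delta$. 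The powers of $k$ are not affected by any of this: they come from the fixed scaling of the Bargmann--Fock kernel and from Taylor remainders whose coefficients we have just bounded. Equivalently, one may quote a form of the expansion in which the dependence of the remainder on a curvature lower bound and on finitely many derivatives of the metric is already explicit (as available in the literature on the Tian--Yau--Zelditch expansion, e.g.\ in work of Liu--Ma and Ma--Marinescu) and then simply invoke the first step.

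The step I expect to be the main obstacle is the uniformity in $k$. It is tempting to argue softly: $\{\omega_\epsilon\}$ is precompact in $\C^{2q-1}$ and the $k$-th Bergman function depends continuously on $\omega_\epsilon$, so the remainder should be uniformly controlled; but this fails, because the modulus of continuity (in $\omega_\epsilon$) of the $k$-th Bergman function degrades as $k\to\infty$, and $\sup_k\bigl(\mathrm{error}(\omega_\epsilon,k)/k^{\,r-q-1}\bigr)$ is only lower semicontinuous in $\omega_\epsilon$, so its finiteness at each metric together with compactness does not produce a uniform bound. The uniform lower bound in the hypothesis is exactly what rules out this degeneration, but exploiting it forces one to open up the local construction rather than pass to a limit. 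Once the peak-section argument is rewritten with its constants displayed as explicit functions of $c_0$ and of $\Vert\phi_\epsilon\Vert_{\C^{2q+1,\alpha}}$, the statement follows by bookkeeping.
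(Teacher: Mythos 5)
The paper offers no argument for this lemma beyond the citation ``well known, see \cite[Proposition 6]{D1}'', and your proposal is a correct reconstruction of the standard proof of that cited fact: one reruns the peak-section/H\"ormander construction of the Tian--Yau--Zelditch expansion and checks that the $L^2$-estimate constant, the Taylor remainders of the local weight, and the elliptic/sub-mean-value constants are all controlled by the two-sided bounds $c_0\,\omega_0\le\omega_\epsilon\le C(\delta)\,\omega_0$ and the $\C^{2q}$ bound, so the approach is essentially the same as the one the paper invokes by reference. Your observation that a soft compactness argument fails because the $k$-th Bergman function's modulus of continuity degrades as $k\to\infty$ is exactly the right explanation of why the quantitative hypothesis is needed.
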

This is well known, see for instance \cite[Proposition 6]{D1}. \\

We are now coming back to the setup of Section \ref{Sect1} and shall construct a sequence of almost-balanced metric. Since we work in the smooth category, it is not difficult to adapt the reasoning in order to obtain an approximate Hermitian-Einstein structure $h_{\epsilon}^\infty$ in the following sense
\begin{equation}\label{uniform2}\Vert  F_{E,h_{\epsilon}^\infty}-\mu(E)\Id_E\,\omega\Vert_{\C^\infty}< \epsilon.\end{equation}
Furthermore we can assume that $h_E$ is real analytic. If $h_E$ is not real analytic we may use a slight generalization of Tian's result \cite{Ti1} of approximation of a positive hermitian metric by a sequence of Bergman type metrics in smooth topology (for a discussion on the smooth convergence see \cite{Ru}). Actually, we can pull-back the canonical metric on the universal bundle $U_{(2)} $ over the Grassmannian $Gr(2,H^0(B,E\otimes L^s))$ for $s>>1$, which provides a sequence of real analytic metrics. This sequence is convergent towards the metric $h_E$ in smooth topology thanks to the asymptotic result for the Bergman kernel of $E\otimes L^s$ that can be found in \cite{W2}. 
\\
On $X=\PP(E)$, the curvature $\omega^\infty_\epsilon$ of the associated real analytic metric $\hat{h}_\epsilon^\infty$ on $\mathcal{O}_{\PP(E)}(1)$ is bounded in $\C^\infty$ norm and is positive. This can be seen by expressing the curvature of $\hat{h}_\epsilon^\infty$ in terms of the curvature terms of $h_\epsilon^\infty$, see Lemma \ref{tech}. We can apply Lemma \ref{fixed}  and for $0<\epsilon<\epsilon_0$ we get a uniform expansion of the Bergman function of $\hat{h}_\epsilon^\infty$ with constant depending only on the maximum order of the expansion and $\epsilon_0$, and hence we denote the constant in Equation \eqref{asympt2} by $C_q(\epsilon_0)$. 
\begin{lemma}\label{uniform3}
In the above setting, there is a constant $C_\infty>0$ depending only on $\epsilon_0$ such that $C_q(\epsilon_0)\leq C_\infty^q$. In other words, the growth of the error constant in \eqref{asympt2} when taking higher order expansion is at most exponential.
\end{lemma}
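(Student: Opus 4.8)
The plan is to return to the peak-section proof behind the expansion \eqref{asympt2} and Lemma \ref{fixed}, tracking in the remainder estimate both the combinatorial size of the coefficients $a_i$ and the precise geometric quantities they involve; the improvement over a crude estimate will come from the real-analyticity of $\hat h_\epsilon^\infty$ together with the fact, visible through Lemma \ref{tech}, that the dominant part of its curvature is parallel.

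First I would record the quantitative version of Lemma \ref{fixed}: from the explicit recursion for the $a_i$ (as in \cite[Proposition 6]{D1} and the references there), $C_q(\epsilon_0)$ is bounded by a weighted polynomial $\Phi_q$ in the numbers $\Vert\nabla^j\omega_\epsilon^\infty\Vert_{\C^0}$, $j\le 2q+O(1)$, and in $(\inf\omega_\epsilon^\infty)^{-1}$, in which $\Phi_q$ is a sum of at most $c_1^{\,q}$ monomials, each a product of factors $\Vert\nabla^{j_t}\omega_\epsilon^\infty\Vert_{\C^0}$ (and powers of $(\inf\omega_\epsilon^\infty)^{-1}$) of total weight $\sum_t(j_t/2+1)\le c_0 q$. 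This is the precise content of ``$a_i$ involves at most the $(2i-2)$-th covariant derivatives of the curvature''.

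Next I would bound the curvature jets uniformly in $q$. By Lemma \ref{tech}, $\omega_\epsilon^\infty=\pi^*\big(\tfrac{\i}{\Vert v\Vert^2_{h_\epsilon^\infty}}\langle F_{E,h_\epsilon^\infty}(v),v\rangle\big)+\omega_{FS}|_{\PP(E)_x}$; differentiating with the Chern connection of $h_\epsilon^\infty$ and the Levi--Civita connection of the base cscK metric $\omega$, one observes that the fibrewise Fubini--Study term is parallel in the base directions and has parallel Riemann curvature along the fibres (round $\PP^1$, up to the constant gauge), that the principal part $\mu(E)\Id_E\,\omega$ of $F_{E,h_\epsilon^\infty}$ is parallel (both $\Id_E$ and $\omega$ are) while $\omega$, being cscK on a curve, has constant and hence parallel curvature, and that what is left, namely $F_{E,h_\epsilon^\infty}-\mu(E)\Id_E\,\omega$ together with its covariant derivatives up to the order dictated by the $q$-th truncation, is made smaller than $\epsilon_0$ by the approximate Hermitian--Einstein condition of Section \ref{Sect1}. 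This would give $\Vert\nabla^j\omega_\epsilon^\infty\Vert_{\C^0}\le B$ for $j\le 2q+O(1)$ with $B$ independent of $q$ and of $\epsilon\in(0,\epsilon_0)$, and $\inf\omega_\epsilon^\infty\ge\delta>0$ uniformly; feeding $B$ and $\delta$ into $\Phi_q$ then yields $C_q(\epsilon_0)\le c_1^{\,q}(B/\delta)^{c_0 q}=:C_\infty^{\,q}$.

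The hard part will be the last assertion of the previous paragraph: making the defect $F_{E,h_\epsilon^\infty}-\mu(E)\Id_E\,\omega$ small not just in one fixed $\C^r$ but up to order $\sim 2q$, with a bound that does not grow with $q$. Here one uses that $h_1,h_2$ are genuinely projectively flat (so the diagonal part of the curvature is exactly $\deg(L_i)\omega$), that the off-diagonal term is rescaled by the constant $\xi_\epsilon^{-2}$ under the gauge and can thus be made arbitrarily small in any prescribed $\C^r$, and that the real-analytic metrics obtained by pulling back the universal metric from the fixed Grassmannian $Gr(2,H^0(B,E\otimes L^s))$ approximate $h_E$ in $\C^\infty$ with a uniform lower bound on the radius of analyticity, so that the corresponding curvature jets of $\omega_\epsilon^\infty$ remain in a fixed $\C^{2q}$-ball. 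The remaining ingredients --- the combinatorics of $\Phi_q$ and the bookkeeping with the two connections in Lemma \ref{tech} --- are routine.
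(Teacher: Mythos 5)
Your overall scheme --- bound $C_q(\epsilon_0)$ by a universal polynomial $\Phi_q$ in the sup-norms $\Vert\nabla^j\omega^\infty_\epsilon\Vert_{\C^0}$ and then show those sup-norms admit a bound $B$ independent of $j$ --- breaks at the second step, and not in a repairable way. For a real-analytic metric that is not locally symmetric, the $j$-th covariant derivative of the curvature grows like $j!\,R^{-j}$, where $R$ is the radius of analyticity; a uniform lower bound on $R$ (which is what the Grassmannian approximation actually provides) yields exactly this factorial growth, not membership of all jets in a fixed ball. The parallelism argument does not rescue this: the defect $F_{E,h^\infty_\epsilon}-\mu(E)\Id_E\,\omega$ is $\xi^{-2}$ times a fixed tensor built from $\alpha$, so $\Vert\nabla^j(\xi^{-2}\alpha)\Vert_{\C^0}=\xi^{-2}\Vert\nabla^j\alpha\Vert_{\C^0}$ still diverges as $j\to\infty$ for any fixed gauge $\xi$; smallness in each fixed $\C^r$ is not smallness uniformly in $r$. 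Once the jets grow factorially, a single monomial of $\Phi_q$ such as $\Vert\nabla^{2q-2}\omega^\infty_\epsilon\Vert_{\C^0}$ is already super-exponential in $q$, so the final estimate $c_1^{\,q}(B/\delta)^{c_0q}$ cannot be reached along this route unless one also shows that the \emph{universal coefficients} multiplying the high-derivative monomials in $a_i$ decay factorially --- which is precisely the point you set aside as ``routine,'' and is in fact the whole difficulty.

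The paper's proof avoids sup-norms of high derivatives altogether. Following \cite{L-L}, it writes the Bergman function via a peak section $S$ at $x$ together with orthonormalized sections vanishing at $x$, and expands the $L^2$ inner products $\langle S,S\rangle_{L^2}$ and $\langle S,T\rangle_{L^2}$ in terms of the \emph{Taylor coefficients} of the analytic data (the metric on the line bundle and the volume form). There the $j$-th derivative enters divided by $j!$, so real-analyticity gives the geometric bound $\vert\beta_i\vert\le C^i$ on the coefficients of $\langle S,S\rangle_{L^2}=k^{-n}(1+\sum_{i\ge 1}\beta_i k^{-i})$ directly, while the uniform bound on $\langle S,T\rangle_{L^2}$ from \cite{L-L} controls the orthonormalization; together these give $\vert a_i\vert\le c_0\gamma^{-i}$ and $C_q(\epsilon_0)\le C_\infty^q$. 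To salvage your approach you would have to reorganize $\Phi_q$ so that derivatives only ever appear in the combination $\nabla^j(\cdot)/j!$, which in effect reproduces the Taylor-coefficient argument of the paper.
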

\begin{proof}
This is a consequence of the techniques used in the proof of \cite[Theorem 1.3]{L-L} (see also Theorem 1.2 of the associated announcement paper). We shall use the notations of the quoted paper. The Bergman function is given by the sum of an orthonormal basis of sections that can be taken as the union of a peaked section and vanishing sections at $x$ that have been orthonormalized. In order to orthonomalize these sections, it is necessary to inverse a matrix formed of the inner products $\langle S,S\rangle_{L^2}$ and $\langle S,T\rangle_{L^2}$ as it is done in \cite[Section 5]{L-L}. 
Since we deal with analytic metrics, we can do the Taylor expansion of the involved metrics (on the line bundle and the volume form) and we get an expansion of the $L^2$ norm $\langle S,S\rangle_{L^2}$ of the peak section $S$ of the form $\langle S,S\rangle_{L^2}=\frac{1}{k^n}(1+\sum_{i\geq 1} \beta_i k^{-i})$. By convergence of this Taylor expansion, one has for a certain uniform constant $C>0$ that depends on the metric, $\vert \beta_i\vert \leq C^i$ for $i\geq 1$. In \cite[Theorem 4.1 (3)]{L-L}, it is proved a uniform bound on the $L^2$-inner product $\langle S,T\rangle_{L^2}$ between a holomorphic section $S$ peaked at the point $x$ and sections $T$ that vanish at order $p'>0$ at $x$. Together these two uniform controls provide the expected growth on the error term $C_q$ of the expansion of the Bergman function. In particular we have shown that in \eqref{asympt2}, one has the control $\vert a_i \vert <\frac{c_0}{\gamma^i}$ for a uniform constant $\gamma>0$. 
\end{proof}

Furthermore, the terms $a_i$ enjoy the property of being polynomial expressions in the curvature and its covariant derivatives. For any integer $q$, we can find a metric $\hat{h}_{\epsilon,q}^\infty$ such that the term
$$\left( k^{r}+ k^{r-1}\frac{\Scal(\omega_{\epsilon})}{2}+k^{r-2}a_2+...+k^{r-q}a_q\right),$$
which satisfies a similar property, is also constant up to an error term of the form $\epsilon/2$ in $\C^0$ norm while we are are still under the assumptions of Lemma \ref{fixed}. This is a consequence of the uniform control in $\C^\infty$ topology of the curvature of $\omega^\infty_\epsilon$ using \eqref{uniform2}. \\
Furthermore, using Lemma \ref{uniform3} and taking $k>C_\infty$, we can impose $$k^{r-q-1}C_{q}(\epsilon_0)\leq k^{r-1}\left(\frac{C'_\infty}{k}\right)^q<\epsilon/2$$ in Equation \eqref{asympt2} by choosing $q$ large enough. Hence, for $k>>0$, we have obtained a metric $\hat{h}_{k,\epsilon}=(\hat{h}_{\epsilon,q}^\infty)^k $ which is almost balanced in the sense of Definition \ref{albal}.\\
Eventually, with the examples considered in Section \ref{Sect1} and Proposition \ref{prop}, we have proved the following result.
\begin{theorem}\label{thm2}
 There exist examples $(X,L)$ of smooth polarized manifolds such that $L$ is asymptotically Chow semistable and not K-polystable.
 For instance, for any irreducible Mumford semistable bundle $E$ (not Mumford stable) of rank 2 over a curve of genus $g\geq 2$, any K\"ahler integral class on the ruled surface $X=\PP(E)$ is asymptotically Chow semistable, not asymptotically Chow polystable and not K-polystable.  
\end{theorem}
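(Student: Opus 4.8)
\noindent The plan is to combine the metric construction carried out just above with the slope computation recorded in Proposition \ref{prop}. Concretely, I would fix an irreducible Mumford semistable rank $2$ bundle $E$ over a curve $C$ of genus $g\ge 2$ that is not stable and, after tensoring by a sufficiently ample line bundle on $C$ (which does not alter $\PP(E)$, preserves irreducibility and semistability, and shifts the slope and the maximal-slope sub-line-bundle by the same amount), assume $E$ ample. Then, following Section \ref{Sect1} and the discussion preceding this statement: start from an approximate Hermitian--Einstein structure $h^\infty_\epsilon$ on $E$ satisfying \eqref{uniform2}, pass to a nearby real analytic metric via the pull-back of the canonical metric on the universal bundle over a Grassmannian, and let $\hat h^\infty_\epsilon$ be the induced metric on $\mathcal O_{\PP(E)}(1)$; by Lemma \ref{tech} its curvature $\omega^\infty_\epsilon$ is positive and, by \eqref{uniform2}, $\C^\infty$-bounded uniformly for $\epsilon<\epsilon_0$. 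Lemma \ref{fixed} then gives an expansion \eqref{asympt2} with constant $C_q(\epsilon_0)$ independent of $\epsilon$, and Lemma \ref{uniform3} gives $C_q(\epsilon_0)\le C_\infty^q$; choosing first $k>C'_\infty$ and then $q=q(k,\epsilon)$ large enough makes the remainder $k^{r-q-1}C_q(\epsilon_0)$ smaller than $\epsilon/2$, while the approximate Hermitian--Einstein property lets one correct the first $q$ Bergman coefficients $k^{r}+k^{r-1}\Scal(\omega^\infty_\epsilon)/2+\dots+k^{r-q}a_q$ to a constant up to $\epsilon/2$ in $\C^0$. Hence $\hat h_{k,\epsilon}=(\hat h^\infty_{\epsilon,q})^k$ is almost balanced in the sense of Definition \ref{albal}.

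With the almost balanced metrics in hand, I would invoke the discussion following \eqref{ineq3}: given any test configuration for $(\PP(E),\mathcal O_{\PP(E)}(1))$, for each large $k$ it induces a one-parameter subgroup of $SL(N_k+1,\mathbb C)$ generated by a Hermitian $A$, and \eqref{ineq3} applied to the almost balanced projective embedding $V$ bounds the Chow weight of the limiting cycle from below by $-\Vert\mu(V)\Vert_2\,\Vert A\Vert_2$, which can be made as close to $0$ as we wish; by the Hilbert--Mumford criterion this forces the Chow weight of the limit to be nonnegative, so $(\PP(E),\mathcal O_{\PP(E)}(1))$ is asymptotically Chow semistable. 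On the other hand, since $E$ is semistable but not stable there is a sub-line-bundle $L_1\subset E$ with $\mu(L_1)=\mu(E)$; the deformation to the normal cone of $\PP(L_1)\subset\PP(E)$ is a non-product test configuration whose Donaldson--Futaki invariant, by the computation of \cite[Theorem 5.13]{RT} used in Proposition \ref{prop}, is a multiple of $\mu(E)-\mu(L_1)=0$, so $(\PP(E),\mathcal O_{\PP(E)}(1))$ is not K-polystable; the parallel Chow-weight computation of \cite[Proposition 4.1, Theorem 4.5]{DV1} shows it is not asymptotically Chow polystable. Taking $X=\PP(E)$, $L=\mathcal O_{\PP(E)}(1)$ gives the first assertion.

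For the statement about an \emph{arbitrary} integral K\"ahler class on $X=\PP(E)$, I would write such a class as $\mathcal O_{\PP(E')}(a)\otimes\pi^*A$ with $E'$ a line-bundle twist of $E$, $a\ge 1$, and $A$ a line bundle on $C$, equip $\mathcal O_{\PP(E')}(1)$ with the metric from Lemma \ref{tech} attached to the twisted approximate Hermitian--Einstein structure on $E'$, and tensor its $a$-th power with a fixed constant-curvature metric pulled back from the cscK metric on $C$. The resulting curvature is again positive and $\C^\infty$-bounded uniformly in $\epsilon$, and its ``scalar curvature'' is almost constant because the base is cscK and the structure is approximately Hermitian--Einstein; so the Bergman machinery of the previous paragraph applies verbatim and produces almost balanced metrics, hence asymptotic Chow semistability. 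Non-polystability persists because the deformation to the normal cone of $\PP(L_1\otimes(\text{twist}))\subset\PP(E')$ is still a non-product test configuration of vanishing Donaldson--Futaki invariant and vanishing Chow weight, the maximal-slope sub-line-bundle having the same slope as $E'$ after twisting.

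The step I expect to be the main obstacle is the one already isolated in Lemmas \ref{fixed} and \ref{uniform3}, namely making a \emph{single} metric balanced for $\mathcal O(k)$ up to $\epsilon$: this requires simultaneously pushing the order $q$ of the Bergman expansion to infinity and keeping the error constant $C_q$ from growing faster than a geometric rate in $k$, and it only works because the order of quantifiers is favourable -- one first fixes $\epsilon$, uses \eqref{uniform2} to get a $\C^\infty$ bound on the curvature uniform for $\epsilon<\epsilon_0$, deduces via Lemma \ref{fixed} that $C_q(\epsilon_0)$ is $\epsilon$-independent and via Lemma \ref{uniform3} that it is at most $C_\infty^q$, and only then chooses $k>C'_\infty$ and $q$ large. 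The remaining ingredients are comparatively soft: the implication ``almost balanced $\Rightarrow$ asymptotically Chow semistable'' is the finite-dimensional Kempf--Ness estimate \eqref{ineq3} together with the Hilbert--Mumford criterion, and the failure of polystability is the slope computation already contained in Proposition \ref{prop}.
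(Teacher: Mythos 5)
Your proposal is correct and follows essentially the same route as the paper: almost balanced metrics are produced from the approximate Hermitian--Einstein structure via the uniformly controlled Bergman expansion (Lemmas \ref{fixed} and \ref{uniform3}), asymptotic Chow semistability follows from the Kempf--Ness inequality \eqref{ineq3} with the Hilbert--Mumford criterion, and the failure of K-polystability and asymptotic Chow polystability comes from the slope computation of Proposition \ref{prop}. Your added discussion of general integral K\"ahler classes via twisting is a reasonable elaboration of a point the paper leaves implicit, but it does not change the method.
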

\medskip

Note that Chow semistability implies K-semistability, see \cite{Thnotes}. Therefore Theorem \ref{thm2} implies straightforward Corollary \ref{cor}.
We chose to present both results in order to stress how one could expect to find examples of K-semistable but non Chow semistable manifolds. Actually, one could try to find a sequence of almost cscK metrics in $\C^0$ norm such that some high order covariant derivatives of the Riemann curvature tensor are unbounded.

\section{Parabolic structures and conic K\"ahler metrics \label{cscK conic}}

\subsection{Construction of stable parabolic structure}

Given a vector bundle $E$ over a curve, we provide an elementary construction that allows to construct a stable parabolic structure on $E$. 

\subsubsection{From an unstable to a semistable parabolic structure\label{sub1}}
Suppose that $E$ has rank 2 and is not semistable.
We have an exact sequence
$$
 0\longrightarrow F_1\longrightarrow E\longrightarrow
 F_2\longrightarrow 0
$$
such that\\
(i) $F_i$ are line bundles,\\
(ii) $\deg(F_1)>\mu(E)=\frac{\deg(F_1)+\deg(F_2)}{2}>\deg(F_2)$.\\

We set
\begin{equation}\label{A}
 A:=\deg(F_1)-\mu(E)=\mu(E)-\deg(F_2)>0.
\end{equation}
We take a large integer $N$ such that
$2A/N<1$ and fix some distinct points $p_i\in X$ for $i=1,\ldots,N$.
We take subspaces
$V_i$ of $E_{|p_i}$ $(i=1,\ldots,N)$ of dimension 1 such that
$V_i\not\subset F_{1|p_i}$.\\
We define the parabolic filtration $\mathcal{F}_{j}(p_i)$ at $p_i$ by $$\mathcal{F}_{0}=E_{|p_i}\supset \mathcal{F}_{1}=V_i \supset \mathcal{F}_{2}=\{0\}$$
with weight $0$ for $\mathcal{F}_0$ and $2A/N$ for $\mathcal{F}_1$. The, the degree of the parabolic bundle $E_{\ast}$ is
$$
 \mathrm{par}\deg(E_{\ast})=\deg(E)+\sum_{i=1}^N 2A/N=\deg(E)+2A$$
Hence, $\mathrm{par}\mu(E_{\ast})=\mu(E)+A$.
The degree of the induced parabolic bundle $F_{1\ast}$ is
$$
 \mathrm{par}\deg(F_{1\ast})=\deg(F_1)=\mu(E)+A=\mathrm{par}\mu(E_{\ast})$$
If $F\subset E$ is a subsheaf of rank one such that $F\not\subset F_1$,
then there exists a non-trivial morphism $F\longrightarrow F_2$.
Hence, we have $\deg(F)\leq \deg(F_2)$ and thus, using (\ref{A}),
$$
 \mathrm{par}\mu(F_{\ast})=\mathrm{par}\deg(F_{\ast})
\leq
 \deg(F)+2A
\leq
 \mu(E)+A
=\mathrm{par}\mu(E_{\ast})
$$
Hence, we have proved that $E_{\ast}$ is parabolic semistable.

\subsubsection{From a semistable to a stable parabolic structure} 

Let $E_{\ast}$ be semistable (not stable) parabolic bundle of rank $2$ over
 $(X,D)$, where $D$ is a finite subset of $X$. We wish to modify the parabolic structure
so that the new parabolic bundle is stable. One of the following holds:
\begin{enumerate}
\item $E_{\ast}\simeq F_{0\ast}\otimes \mathbb{C}^2$,
\item $E_{\ast}\simeq F_{1\ast}\oplus F_{2\ast}$,
\item there exists a non-split exact sequence
 $$0\longrightarrow F_{1\ast}\longrightarrow
 E_{\ast}\longrightarrow F_{2\ast}\longrightarrow 0,$$
 where $F_{i\ast}$ are parabolic bundles of rank one.
\end{enumerate}

We choose $p'_1,p'_2,p'_3\in X\setminus D$
and subspaces $V_i\subset E_{|p'_i}$ of dimension 1
with the following property: if  $F_{\ast}\subset E_{\ast}$
with $\mathrm{par}\deg(F_{\ast})=\mathrm{par}\mu(E_{\ast})$, then $F_{|p'_i}=V_i$ may happen
for at most one $i$.\\
Let us choose a real number $1>\epsilon>0$.
We consider the parabolic filtration $\mathcal{F}'_{j}(p'_i)$ at $p'_i$ by $$\mathcal{F}'_{0}=E_{|p'_i}\supset \mathcal{F}'_{1}=V_i \supset \mathcal{F}'_{2}=\{0\}$$
with weight $0$ for $\mathcal{F}'_0$ and weight $\epsilon$ for $\mathcal{F}'_1$.
We also consider the same parabolic structure
at the points of $D$.
We obtain a parabolic bundle $E^{(\epsilon)}_{\ast}$. We have
$$
 \mathrm{par}\mu(E^{(\epsilon)}_{\ast})
=\mathrm{par}\mu(E_{\ast})+3\epsilon/2.
$$
For $F_{\ast}\subset E_{\ast}$
such that $\mathrm{par}\deg(F_{\ast})=\mathrm{par}\mu(F_{\ast})=\mathrm{par}\mu(E_{\ast})$,
we have
$$
 \mathrm{par}\deg(F^{(\epsilon)}_{\ast})
\leq
 \mathrm{par}\deg(F_{\ast})+\epsilon
<\mathrm{par}\mu(E^{(\epsilon)}_{\ast})
$$
If $\epsilon$ is sufficiently small,
then
$\mathrm{par}\mu(F^{(\epsilon)}_{\ast})
<\mathrm{par}\mu(E^{(\epsilon)}_{\ast})$
for any $F_{\ast}$ such that
$\mathrm{par}\mu(F_{\ast})<\mathrm{par}\mu(E_{\ast})$.
Finally, with the new induced parabolic structure, $E_{\ast}^{(\epsilon)}$ is parabolic stable over the rank 2 bundle $E$. We explain now how to deal with higher rank bundles.

\subsubsection{Higher rank and corollaries} 
By induction on the rank of the bundle, we have a generalization of our reasoning on any vector bundle over a curve. Let $r\geq 2$ be the rank of $E$. Since the degrees of the subbundles of $E$ are bounded from above, let choose $F_2$ the maximal destabilizing subbundle of $E$. Then $F_1=E/F_2$ is a vector bundle and we have the exact sequence $$ 0 \rightarrow F_1 \rightarrow E \rightarrow F_2 \rightarrow 0.$$ 
Let $F$ a subbundle of $E$. If $F\subset F_1$, then by induction, we can find a stable parabolic stable structure on $F_1$ that we denote $(F_{1})_{\ast_1}$ and thus $\mathrm{par}\mu(F_{\ast_{_1}}) < \mathrm{par}\mu((F_{1})_{\ast_1})$. Let us assume that $A=\mathrm{par}\mu((F_1)_{\ast_1})- \mathrm{par}\mu(E_{\ast_1})$ is non negative. It remains to show that one can refine the considered parabolic structure $\ast_1$ so that $\mathrm{par}\mu(F_{\ast_{_2}}) < \mathrm{par}\mu((F_{1})_{\ast_2}) \leq \mathrm{par}\mu(E_{*_2})$ with respect to a new structure $\ast_2$. This is done as in the previous subsection by choosing an adapted filtration so that 
\begin{align*}
 \mathrm{par}\deg(E_{\ast_2}) &=\mathrm{par}\deg(E_{\ast_1}) + rA,\\
 \mathrm{par}\deg((F_1)_{\ast_2}) &=\mathrm{par}\deg((F_1)_{\ast_1}), \\
 \mathrm{par}\deg((F_2)_{\ast_2}) &\leq \mathrm{par}\deg((F_2)_{\ast_1})+rA.
\end{align*}
If $F\not\subset F_1$, then there is a non trivial morphism $F\longrightarrow F_2$ and eventually $\mathrm{par}\mu(F_{\ast_2})\leq \mathrm{par}\mu(E_{*_2})$. We  get the parabolic semistability of $(E_{\ast_2})$. We apply the same reasoning as in the previous subsection to derive the following result.

\begin{theorem}\label{process}
 Given $E$ a holomorphic vector bundle over a curve, one can find sufficiently many points $p_i$ and sufficiently small weights $\beta>0$ such that the associated parabolic structure $E_\ast$ is Mumford parabolic stable. If $E$ is Mumford semistable and has rank 2, it is sufficient to consider 3 points.
\end{theorem}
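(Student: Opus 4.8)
The plan is to assemble the theorem from the three constructions carried out in this section, keeping track of the number of auxiliary points in the rank $2$ case. Start with $\rk E=2$. If $E$ is not Mumford semistable, run the construction of \S\ref{sub1}: with $A$ as in \eqref{A}, choose an integer $N$ with $2A/N<1$, points $p_1,\dots,p_N$ and lines $V_i\subset E_{|p_i}$ with $V_i\not\subset F_{1|p_i}$, and assign weight $2A/N$; as computed there this yields a parabolic bundle $E_\ast$ which is parabolic semistable, with $\mathrm{par}\mu(F_{1\ast})=\mathrm{par}\mu(E_\ast)$. If $E$ is already Mumford semistable, do nothing at this stage and set $D=\emptyset$. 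In either case one is reduced to a rank $2$ parabolic bundle $E_\ast$ over $(X,D)$ that is parabolic semistable but possibly not stable, and one applies the refinement of the following subsection: pick three points $p'_1,p'_2,p'_3\in X\setminus D$ and lines $V_i\subset E_{|p'_i}$ such that no subsheaf $F_\ast\subset E_\ast$ with $\mathrm{par}\deg(F_\ast)=\mathrm{par}\mu(E_\ast)$ has $F_{|p'_i}=V_i$ for two distinct indices $i$. Attaching weight $\epsilon$ at those points raises $\mathrm{par}\mu(E_\ast)$ by $3\epsilon/2$ but raises $\mathrm{par}\deg$ of any maximal-slope $F_\ast$ by at most $\epsilon$, hence strictly destabilises all of them, while for $\epsilon$ small it preserves $\mathrm{par}\mu(F_\ast)<\mathrm{par}\mu(E_\ast)$ for every remaining subsheaf; so $E^{(\epsilon)}_\ast$ is parabolic stable. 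When $E$ is Mumford semistable of rank $2$ only $p'_1,p'_2,p'_3$ are used, which is the last sentence of the statement; and shrinking $2A/N$ (take $N$ larger) and $\epsilon$ makes all the weights $\beta$ as small as desired.

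The only step that is not purely formal is the existence of the three lines $V_i$, and this is the point I expect to be the main obstacle. Here one uses that the subsheaves $F_\ast\subset E_\ast$ achieving $\mathrm{par}\deg(F_\ast)=\mathrm{par}\mu(E_\ast)$ form a bounded family: up to the equivalence that matters these are the maximal-degree sub-line-bundles of $E$, and by the trichotomy 1.--3.\ above they are parametrised either by a single point (the kernel line bundle in the non-split case, the two summands in the split case) or by a $\PP^1$ (the case $E_\ast\simeq F_{0\ast}\otimes\mathbb{C}^2$). In each instance $F\mapsto F_{|p'_i}\in\PP(E_{|p'_i})$ is injective on the family, so requiring that some member meet $V_i$ at two of the three points is a codimension $\geq 1$ condition on $(V_1,V_2,V_3)\in\prod_i\PP(E_{|p'_i})$; there are only finitely many such conditions, so a generic triple avoids all of them. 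Thus the delicate input is exactly the boundedness of the maximal-parabolic-slope subsheaves, which guarantees that finitely many generic points with small weights break all of them at once without creating new destabilisers.

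For $r=\rk E>2$ one proceeds by induction, the case $r=1$ being trivial since a line bundle with any parabolic weights is stable. If $E$ is already Mumford semistable, go straight to the semistable-to-stable step (the rank-$2$ argument generalises: finitely many generic points carrying a small weight $\epsilon$ destabilise the bounded family of maximal-slope subsheaves). Otherwise peel off one step of the Harder--Narasimhan filtration, obtaining an exact sequence $0\to F_1\to E\to F_2\to 0$ with a factor of rank $<r$, apply the inductive hypothesis to equip it with a parabolic stable structure $\ast_1$ supported away from a chosen finite set, and then, exactly as in \S\ref{sub1}, add parabolic data at enough further points with correspondingly small weights (and filtration lines generic relative to $F_1$) so that, with $A$ the relevant slope discrepancy,
\begin{align*}
\mathrm{par}\deg(E_{\ast_2}) &=\mathrm{par}\deg(E_{\ast_1})+rA,\\
\mathrm{par}\deg((F_1)_{\ast_2}) &=\mathrm{par}\deg((F_1)_{\ast_1}),\\
\mathrm{par}\deg((F_2)_{\ast_2}) &\leq \mathrm{par}\deg((F_2)_{\ast_1})+rA.
\end{align*}
These identities force the required parabolic slope inequalities: a subsheaf inside $F_1$ is handled by the inductive stability of $(F_1)_{\ast_1}$, a subsheaf not inside $F_1$ by its nontrivial image in $F_2$, and one obtains parabolic semistability of $E_{\ast_2}$; a final application of the semistable-to-stable step then yields a Mumford parabolic stable structure on $E$. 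As in rank $2$, the crux at each stage is that the maximal-parabolic-slope subsheaves remain a bounded family, so granting this the remainder is bookkeeping with the slope inequalities displayed above.
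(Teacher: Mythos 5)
Your proposal is correct and follows essentially the same three-step route as the paper: the unstable-to-semistable step with $N$ points of weight $2A/N$, the semistable-to-stable refinement with three generic points of small weight $\epsilon$, and the Harder--Narasimhan induction with the same slope bookkeeping for higher rank. The only difference is that you spell out why the three lines $V_i$ exist (boundedness of the family of maximal-parabolic-slope subsheaves and a genericity/codimension count), a point the paper simply asserts; this is a welcome addition rather than a deviation.
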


Theorem \ref{process} restricted to rational weights and the main result of \cite{Ro1} provide a new proof of an old result of C. Lebrun and M. Singer \cite[Theorem 3.11]{LS}. Note that the assumption on the genus is made to kill the non trivial holomorphic vector fields.

\begin{corollary}
Consider a ruled surface $S$ over a curve $C$ of genus $g\geq 2$. The blow up of $S$ at sufficiently many points admits a cscK metric.  
\end{corollary}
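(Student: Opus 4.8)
The plan is to deduce the statement from Theorem \ref{process} together with the main result of \cite{Ro1}; this is exactly the ``new proof'' of \cite[Theorem 3.11]{LS} announced above. Write the (geometrically) ruled surface as $S=\PP(E)$ for a rank $2$ holomorphic bundle $E\to C$ (if $S$ is not geometrically ruled it is already a blow-up of such a $\PP(E)$, and a blow-up of a blow-up of $\PP(E)$ is again a blow-up of $\PP(E)$, so it suffices to treat the geometrically ruled case). First I would feed $E$ into Theorem \ref{process} in its rational-weight form: there are finitely many points $p_1,\dots,p_N\in C$, lines $V_i\subset E_{|p_i}$ and small positive rational weights $\beta_i=\ell_i/n_i$ so that the parabolic bundle $E_\ast$ over $(C,\{p_1,\dots,p_N\})$ thus defined is Mumford parabolic stable. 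If $E$ is Mumford semistable one may take $N=3$, but no semistability hypothesis on $E$ is needed, which is why the conclusion holds for an arbitrary ruled surface over $C$.

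Next I would convert this parabolic datum into an orbifold ruled surface carrying a canonical constant scalar curvature Kähler metric. By the standard equivalence between parabolic bundles with rational weights and orbifold bundles, $E_\ast$ corresponds to a genuine Mumford stable orbifold bundle $\widetilde E$ over the orbifold curve $\widetilde C$ obtained from $C$ by prescribing a cone point of order $n_i$ at $p_i$. Since $g\ge 2$, the orbifold $\widetilde C$ is hyperbolic for every choice of the $p_i$ and the $n_i$, so the Mehta--Seshadri (orbifold Narasimhan--Seshadri) theorem endows $\widetilde E$, after a harmless projective twist normalising its parabolic degree, with a projectively flat Hermitian--Einstein metric; equivalently $\PP(\widetilde E)\to\widetilde C$ is a flat $\PP^1$-orbibundle. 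Assembling the constant curvature metric of $\widetilde C$ with the fibrewise Fubini--Study metrics --- precisely as Lemma \ref{tech} and \cite{ACGT,AT1} do in the smooth case --- produces a cscK orbifold metric on $\PP(\widetilde E)$. The underlying complex surface of $\PP(\widetilde E)$ has cyclic quotient singularities on the fibre over each $p_i$, and by the explicit Hirzebruch--Jung description of those singularities its minimal resolution $\tilde S$ is obtained from $S=\PP(E)$ by a finite iterated blow-up at points (in general infinitely near ones) lying over the $p_i$; thus $\tilde S\to S$ is a blow-up of $S$ at finitely many points.

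Finally I would invoke the main result of \cite{Ro1}, which performs the desingularisation at the level of metrics: the cscK orbifold metric on $\PP(\widetilde E)$ can be deformed to a genuine smooth cscK metric on the resolution $\tilde S$, in a Kähler class of the form $q^\ast[\omega_{\mathrm{orb}}]-\sum_j\epsilon_j E_j$ with the $\epsilon_j>0$ small and the $E_j$ the exceptional divisors. It is exactly here that the hypothesis $g\ge 2$ is used: it kills the holomorphic vector fields on $\tilde S$ (equivalently, the parabolic-stable orbifold ruled surface has discrete automorphism group), so the Futaki-type obstruction to the gluing argument of \cite{Ro1} vanishes and the implicit-function argument producing the metric is unobstructed. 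Since $\tilde S$ is a blow-up of $S$ at sufficiently many points, this is the assertion. When $E$ is already Mumford polystable no parabolic modification is needed: $E$ carries a projectively flat metric and $S$ is itself cscK, while the same result of \cite{Ro1} covers further blow-ups.

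The step I expect to be the main obstacle is the junction between the two inputs: identifying the minimal resolution of the coarse space of $\PP(\widetilde E)$ with a concrete iterated blow-up of $S$ --- tracking the Hirzebruch--Jung chain attached to each fibre $\PP(E)_{p_i}$, and quantifying how many blow-ups are needed, which is what ``sufficiently many'' means --- and verifying that this resolution meets the admissibility hypotheses of \cite{Ro1}. The analytic half of this (the gluing/deformation turning the orbifold cscK metric into a smooth one) is genuinely delicate, but it is precisely the content of the cited theorem and may be used here as a black box.
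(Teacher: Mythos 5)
Your proposal is correct and follows essentially the same route as the paper: apply Theorem \ref{process} with rational weights to produce a parabolically stable structure on $E$ (no semistability of $E$ needed), and then invoke the main result of \cite{Ro1} to obtain a cscK metric on the associated iterated blow-up, the genus hypothesis serving only to kill holomorphic vector fields. The orbifold/Mehta--Seshadri/Hirzebruch--Jung details you spell out are precisely the internal machinery of \cite{Ro1}, which the paper uses as a black box in its one-line argument.
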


Of course, a different proof can be given using the work of C. Arezzo- F. Pacard. The ruled surface is birationnally equivalent to the product $C\times \mathbb{P}^1$, which means that a blow-up of $S$ is also a blow-up of $C\times \mathbb{P}^1$, on which there exists a cscK metric. Then it is sufficient to apply the main result of \cite{AP1}. Our construction has the advantage to be more constructive.

\subsection{Construction of constant scalar curvature K\"ahler metric with conic singularities}

Let's start with $E$ a semistable bundle over a curve $C$. From the previous section (Theorem \ref{process}), we have obtained a stable parabolic structure $E_*$ along the points $\mathcal{P}=\{p_1,...,p_{m_\mathcal{P}}\}$, with $m_\mathcal{P}\geq 3$ and weight less than $\beta_0>0$. We shall see that there is an Hermitian-Yang-Mills metric with respect to a K\"ahler-Einstein metric with conic singularity along the associated points to the parabolic structure $E_*$.   
Firstly, let us recall the notion of K\"ahler metric with conical singularity, focusing on the complex dimensional one case. 
\begin{definition}
Let $C$ be a complex curve. Let $\mathcal{P}=\{p_j\}_{j=1,..,m_\mathcal{P}}\subset C$ be a finite set of points. Let $\beta=(\beta_1,..,\beta_{m_\mathcal{P}})$ with $0<\beta_i\leq 1$ be the cone angles. Given a point $p_i\in \mathcal{P}$, label a local chart $(V_{p_i},z_1)$ centered at $p_i$ as local cone chart. A K\"ahler metric with conical singularity  and cone angle $2\pi \beta_i$ along $p_i$ (in short a conical K\"ahler metric) is a closed positive (1,1) current and a smooth K\"ahler metric on $C\setminus \mathcal{P}$ such that in a local cone chart $V_{p_i}$ its K\"ahler form is quasi-isometric to the cone flat metric 
\begin{equation}\label{local}\omega_{cone}=\frac{\sqrt{-1}}{2}\beta_i^2 \vert z_1\vert^{2(\beta_i-1)} dz_1\wedge d\bar z_1.\end{equation}
\end{definition}

Over a complex curve, the notions of K\"ahler class and a pointwise conformal class are equivalent. We can apply the work of M. Troyanov \cite{Tro} and therefore fixing at any $m_\mathcal{P}$ points, with at least $m_\mathcal{P}\geq 3$, a curve $C$ admits a conical K\"ahler-Einstein metric $\omega_\beta$ along these fixed points for any angle between 0 and 1. Let us consider a model cone metric of the form $$\tilde{\omega}_\beta=\frac{\sqrt{-1}}{c_\beta}\sum_{i=1}^{m_\mathcal{P}} \partial\bar\partial \vert \sigma_i\vert^{2\beta}+\omega$$
where $\omega$ is a smooth K\"ahler form on $C$, $\beta<\beta_0$, the sections $\sigma_i$ vanish exactly at $p_i$ and $c_\beta>0$ is large enough so that $\tilde{\omega}_\beta$ is positive over the curve. A direct computation shows that actually $\tilde{\omega}_\beta$ satisfies the previous definition. Now, the behavior of conical K\"ahler-Einstein metrics are pretty well understood and it has been derived some Laplacian estimates for it. From \cite[Theorem 1]{Bre1} (see also \cite[Theorem A]{CGP}, \cite[Section 5.2]{GP} and \cite[Theorems 1 and 2]{Yao}), we know that the conical K\"ahler-Einstein metric is uniformly equivalent to the model cone metric for small enough angle, i.e there exists a certain constant $\delta>0$ such that
\begin{equation}
\frac{1}{\delta}\tilde{\omega}_\beta < \omega_\beta < \delta\tilde{\omega}_\beta.
 \label{unif}
\end{equation}

We shall explain now how to apply Simpson's theory \cite{Si1} in our context. Firstly, it is well known that a conical K\"ahler metric has finite volume. Moreover there is an exhaustion function $\varphi$ of $C$ such that $\Delta_{\omega_\beta}\varphi$ is bounded, and Sobolev inequality holds with respect to $\omega_\beta$. Both facts are checked in   \cite[Proposition 4.1]{Li1} where the arguments use only the local expression of the metric \eqref{local} and thus are still valid for $\omega_\beta$. 
Moreover, in dimension one, analytic stability and parabolic stability coincide. In \cite[Section 3]{Li1}, it is constructed a metric $K_0$ on $E_*$ such that its curvature satisfies 
$\vert \Lambda_{\tilde{\omega}_\beta} F_{K_0}\vert_{K_0}$ is bounded on $C$ for small enough angle $\beta>0$. Therefore $\vert \Lambda_{{\omega}_\beta} F_{K_0}\vert_{K_0}$ is still bounded using \eqref{unif}. By stability of the parabolic structure $E_*$, Simpson's theorem \cite[Theorem 1]{Si1} imply the existence of a Hermitian-Yang-Mills metric $H_E$ on $E$ satisfying the Hermitian-Einstein equation \begin{equation}F_{H_E}=\frac{\mathrm{tr} F_{H_E}}{r}\Id_E\,\omega_\beta={\textrm{par}\mu(E)}\Id_E\, \omega_\beta\label{HEeqn}
\end{equation}
over $C\setminus \mathcal{P}$, with $r$ the rank of $E$. 

Let us now consider the ruled manifold $X=\mathbb{P}E$ and $\pi:X\rightarrow C$ the projection on the base manifold. We denote $D\subset \mathbb{P}(E)$ the induced divisor from the preimage of $\mathcal{P}$.
The metric $H_E$ on $E$ induce a metric $\hat{H}_E$ on $\mathcal{O}_{X}(1)$. Actually for any point $p\in C$, $a,b\in E_{\vert p}$, and $\gamma\in E_{\vert p}^*$, one can define locally the metric $\hat{H}_E$ by
$\hat{H}_E(\hat u, \hat v)= \frac{\gamma(u)\overline{\gamma(v)}}{\Vert \gamma\Vert^2_{H_E} }$. \\
The curvature of $\hat{H}_E$ on $X$ is denoted $\hat{\omega}_E$. Since its restriction to the fibre is the Fubini-Study, it is non degenerate on $\mathbb{P}E_{\vert p}$ for $p\in C\setminus \mathcal{P}$. From Equation 
\eqref{HEeqn}, it satisfies $$\hat{\omega}_E={\textrm{par}\mu(E)} \pi^*\omega_\beta,$$ see \cite[Section 1]{Fuj}. Consequently, for any $m$ large enough and from the properties of $\omega_\beta$, the metric $\hat{\omega}_E+m\pi^* \omega_\beta$ is a closed positive current, a K\"ahler metric with conical singularities along $D$ and has constant scalar curvature on $X \setminus D$.
\begin{theorem}\label{thm2b}
 Given $E$ a vector bundle over a curve $C$, there exists a smooth divisor $D\subset \mathbb{P}(E)$ and a constant scalar curvature K\"ahler on $\mathbb{P}(E)$ with conical singularity along $D$. If $E$ is semistable and has rank 2, $D$ can be given by the preimage of 3 points of $C$.
\end{theorem}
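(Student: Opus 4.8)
The plan is to combine the parabolic construction of this section with Simpson's non-compact Hermitian–Yang–Mills theory and a fibration argument. First I would apply Theorem \ref{process} to equip $E$ with a Mumford parabolic stable structure $E_\ast$ supported on a finite set $\mathcal{P}=\{p_1,\dots,p_{m_\mathcal{P}}\}\subset C$ with $m_\mathcal{P}\geq 3$ and all parabolic weights below a threshold $\beta_0>0$; when $E$ has rank $2$ and is semistable, three points suffice. Since on a curve a Kähler class is the same data as a pointwise conformal class, Troyanov's theorem \cite{Tro} then furnishes, for every $\beta<\beta_0$, a conical Kähler–Einstein metric $\omega_\beta$ on $C$ with cone angle $2\pi\beta$ at each $p_i$. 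Comparing $\omega_\beta$ with the explicit model cone metric $\tilde\omega_\beta$ introduced above and invoking the Laplacian and uniform-equivalence estimates of \cite{Bre1,CGP,GP,Yao} one records the two-sided bound \eqref{unif}.

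Next I would run Simpson's machinery on $C\setminus\mathcal{P}$. The required analytic hypotheses — finite volume, an exhaustion function whose Laplacian is bounded, and a Sobolev inequality — hold for $\omega_\beta$: they are local near the cone points, follow for $\tilde\omega_\beta$ from the computations in \cite[Proposition 4.1]{Li1}, and transfer to $\omega_\beta$ through \eqref{unif}. Using the background metric $K_0$ on $E_\ast$ built in \cite[Section 3]{Li1}, whose mean curvature $\Lambda_{\tilde\omega_\beta}F_{K_0}$ is bounded for small $\beta$ (hence $\Lambda_{\omega_\beta}F_{K_0}$ is bounded by \eqref{unif}), together with the fact that in complex dimension one analytic stability coincides with parabolic stability, Simpson's theorem \cite[Theorem 1]{Si1} produces a Hermitian–Einstein metric $H_E$ on $E$ over $C\setminus\mathcal{P}$ solving \eqref{HEeqn} with Einstein factor $\mathrm{par}\mu(E)$.

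Finally I would push the metric up to $X=\mathbb{P}(E)$. The metric $H_E$ induces the fibrewise Fubini–Study metric $\hat H_E$ on $\mathcal{O}_X(1)$, whose curvature $\hat\omega_E$ restricts to the Fubini–Study form on each fibre over $C\setminus\mathcal{P}$, and equation \eqref{HEeqn} forces its horizontal part to be $\mathrm{par}\mu(E)\,\pi^*\omega_\beta$, following the computation of \cite{Fuj}. For $m$ large, $\hat\omega_E+m\pi^*\omega_\beta$ is then a closed positive current which is a genuine Kähler metric on $X\setminus D$ for $D=\pi^{-1}(\mathcal{P})$ — a disjoint union of smooth projective-space fibres, hence a smooth divisor — which near $D$ is quasi-isometric to the product of a flat cone with the Fubini–Study metric on the fibre (so it is conical along $D$ in the sense of the above definition), and which has constant scalar curvature on $X\setminus D$ because it is the standard ruled-manifold ansatz over a cscK (here Kähler–Einstein) base built from a Hermitian–Einstein fibre metric. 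When $E$ has rank $2$ and is semistable, the three points supplied by Theorem \ref{process} give $D$ as the preimage of three points of $C$, which is the refined statement.

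I expect the main obstacle to be the analytic step rather than the formal fibration step: one must check carefully that the conical Kähler–Einstein metric $\omega_\beta$ genuinely falls within the scope of Simpson's existence theorem and that the background metric $K_0$ keeps bounded mean curvature with respect to $\omega_\beta$ (not merely $\tilde\omega_\beta$), uniformly as the cone angle degenerates, and then that the resulting Hermitian–Einstein metric is regular enough near $D$ for the curvature identity and the conical structure on $X$ to be read off. Once these are in place, Troyanov's theorem and the classical projectivisation-of-Hermitian–Einstein-bundles computation of \cite{Fuj} make the rest essentially formal.
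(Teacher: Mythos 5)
Your proposal follows the paper's proof essentially verbatim: stable parabolic structure from Theorem \ref{process}, Troyanov's conical K\"ahler--Einstein metric with the uniform equivalence \eqref{unif}, verification of Simpson's hypotheses and the background metric $K_0$ via \cite{Li1}, Simpson's theorem to get $H_E$, and the Fujiki-type computation pushing everything up to $\mathcal{O}_{\PP(E)}(1)$ and adding $m\pi^*\omega_\beta$. The approach and all the key references coincide with the paper's argument, so there is nothing further to compare.
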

We address now some remarks. By changing $m$, this theorem provides in particular examples of cscK cone metric in the class of $[\hat{\omega}_{E}+m\pi^*\omega_\beta]$ which are not K\"ahler-Einstein.  In \cite{RoSi}, it is explained in details how to obtain scalar-flat K\"ahler metrics with orbifold singularities when one restricts to the particular case of a ruled surface and the parabolic structure has rational weights, using Mehta-Seshadri theorem. Also, remark that from the point of view of extremal K\"ahler metrics, some conical metrics are constructed in \cite{Lih1} using the formalism developed by Apostolov et al in \cite{ACGT2}, while G. Sz\'ekelyhidi studied the geometric splitting of a ruled surface given by a non stable manifold under the Calabi flow in \cite{Sz1}. From the point of view of stability, we expect that Theorem \ref{thm2b} provides examples of log-K-stable manifolds in the sense of \cite{D8}.

\bigskip

\noindent {\bf Acknowledgments.} We are very grateful to Vestislav Apostolov, Takuro Mochizuki, Yann Rollin, Xiaowei Wang and Kai Zheng for useful conversations.
This research was partially supported by the French Agence Nationale de la Recherche - ANR project EMARKS.
{\small
\bibliography{Ksstable}
}
\bigskip

\noindent 
{\footnotesize
\textsc{I2M, Centre de Math\'ematiques et Informatique, Aix-Marseille University \\ Technop\^ole de Ch\^ateau Gombert, 39 rue F. Joliot-Curie \\ 13453 Marseille Cedex 13,  FRANCE}}\\
 \url{julien.keller@univ-amu.fr}

\end{document}